\def\thetitle{Oriented discrepancy of Hamilton cycles}
\definecolor{CombinatoricaAqua}{HTML}{00698C}
\definecolor{CombinatoricaBlue}{HTML}{3A3293}
\definecolor{CombinatoricaBrown}{HTML}{66220C}
\definecolor{CombinatoricaRed}{HTML}{DF2A27}
\definecolor{HarvardCrimson}{rgb}{0.6471, 0.1098, 0.1882}
\let\reftagform@=\tagform@
\def\tagform@#1{\maketag@@@
	{(\ignorespaces\textcolor{CombinatoricaBrown}{#1}\unskip\@@italiccorr)}}
\renewcommand{\eqref}[1]{\textup{\reftagform@{\ref{#1}}}}
\Crefname{fact}{Fact}{Facts}
\Crefname{claim}{Claim}{Claims}
\declaretheoremstyle[
spaceabove=\topsep, spacebelow=\topsep,
headfont=\color{CombinatoricaBrown}\normalfont\bfseries,
bodyfont=\itshape,
]{thm}
\declaretheoremstyle[
spaceabove=\topsep, spacebelow=\topsep,
headfont=\color{CombinatoricaBrown}\normalfont\bfseries,
bodyfont=\normalfont,
]{dfn}
\declaretheoremstyle[
spaceabove=0.5\topsep, spacebelow=0.5\topsep,
headfont=\color{CombinatoricaBrown}\normalfont\bfseries,
bodyfont=\normalfont,
]{rmk}
\declaretheorem[style=thm,parent=section]{theorem}
\declaretheorem[style=thm,sibling=theorem]{lemma}
\declaretheorem[style=thm,sibling=theorem]{corollary}
\declaretheorem[style=thm,sibling=theorem]{observation}
\declaretheorem[style=thm,sibling=theorem]{conjecture}
\renewcommand{\eprint}[1]{\href{https://arxiv.org/abs/#1}{arXiv:#1}}
\renewcommand{\PrintNames@a}[4]{%
	\PrintSeries{\name}
	{#1}
	{}{ and \set@othername}
	{,}{ \set@othername}
	{}{ and \set@othername}
	{#2}{#4}{#3}%
}
\def\mathcolor#1#{\@mathcolor{#1}}
\def\@mathcolor#1#2#3{%
	\protect\leavevmode
	\begingroup
	\color#1{#2}#3%
	\endgroup
}
\definecolor{Red}{rgb}{0.618,0,0}
\definecolor{Blue}{rgb}{0,0,1}
\definecolor{Green}{rgb}{0,0.298,0}
\title{\thetitle}
\author{
  Lior Gishboliner\thanks{
    Department of Mathematics, ETH Z\"{u}rich, Switzerland.
    Email: \href{mailto:lior.gishboliner@math.ethz.ch}
                {\tt lior.gishboliner@math.ethz.ch}. Research supported by SNSF grant 200021\_196965.}
  \and
  Michael Krivelevich\thanks{
    School of Mathematical Sciences,
    Tel Aviv University,
    Tel Aviv 6997801, Israel.
    Email: \href{mailto:krivelev@tauex.tau.ac.il}
                {\tt krivelev@tauex.tau.ac.il}.
    Research supported in part by USA--Israel BSF grant 2018267.}
  \and
  Peleg Michaeli\thanks{
    Department of Mathematical Sciences,
    Mellon College of Science,
    Carnegie Mellon University,
    Pittsburgh, PA, USA.
    Email: \href{mailto:pelegm@cmu.edu}
                {\tt pelegm@cmu.edu}.}
}
\def\namedlabel#1#2{\begingroup
  #2%
  \def\@currentlabel{#2}%
  \phantomsection\label{#1}\endgroup
}
\newcommand{\defn}[1]{{\bfseries #1}}
\newcommand{\eps}{\varepsilon}
\renewcommand{\phi}{\varphi}
\newcommand{\sm}{\smallsetminus}
\newcommand{\es}{\varnothing}
\newcommand{\pr}[0]{\mathbb{P}}
\newcommand{\whp}[0]{\textbf{whp}}
\begin{document}
\maketitle

\begin{abstract}
  We propose the following extension of Dirac's theorem:
  if $G$ is a graph with $n\ge 3$ vertices and minimum degree $\delta(G)\ge n/2$,
  then in every orientation of $G$
  there is a Hamilton cycle with at least $\delta(G)$ edges oriented in the same direction.
  We prove an approximate version of this conjecture,
  showing that minimum degree $\frac{n+8k}{2}$ guarantees a Hamilton cycle with at least $(n+k)/2$ edges oriented in the same direction.
  We also study the analogous problem for random graphs,
  showing that if the edge probability $p = p(n)$ is above the Hamiltonicity threshold,
  then, with high probability, in every orientation of $G \sim G(n,p)$ there is a Hamilton cycle with $(1-o(1))n$ edges oriented in the same direction. 
\end{abstract}

\section{Introduction}
Recently there has been much interest in discrepancy-type problems on graphs.
In the general setting of combinatorial discrepancy theory, one is given a hypergraph $\mathcal{H}$ and should colour the vertices of $\mathcal{H}$ with two colours so that each edge is as balanced as possible.
More precisely, given a colouring $f : V(\mathcal{H}) \rightarrow \{-1,1\}$, define the imbalance of an edge $e \in \mathcal{H}$ to be $D(e) = \left| \sum_{x \in e}{f(x)} \right|$. The discrepancy of $\mathcal{H}$ is then defined as 
$\min_{f : V(\mathcal{H}) \rightarrow \{-1,1\}} \max_{e \in E(\mathcal{H})}D(e)$. 
In other words, the discrepancy is the maximum {\em guaranteed imbalance}. 
We refer the reader to Chapter~4 in the book of Matou\v{s}ek~\cite{Mat} for an overview of combinatorial discrepancy. 

Recent works considered instances of the above general setting, in which the hypergraph $\mathcal{H}$ originates from a graph, that is, $V(\mathcal{H})$ is the edge-set of some graph $G$, and $E(\mathcal{H})$ is a family of certain subgraphs of $G$. The goal is to identify conditions on $G$ which guarantee that there exists a high-discrepancy subgraph of the specific type.
For example, Balogh, Csaba, Jing and Pluh\'{a}r~\cite{BCJP20} proved that if $G$ has $n$ vertices and minimum degree $(3/4 + \eps)n$, then in every $2$-colouring of $E(G)$, there is a Hamilton cycle with at least $(1/2+\varepsilon/64)n$ edges of the same colour (and hence discrepancy at least $\varepsilon n/32$). This was generalised to an arbitrary number of colours (with a suitable definition of multicolour discrepancy) in~\cite{GKM22} and independently~\cite{FHLT21}. This result (for two colours) was extended to $K_r$-factors and $r$-powers of Hamilton cycles in~\cite{BCPT21} and~\cite{Bra21+}, respectively. In~\cite{GKM22+}, the authors determined the multicolour discrepancy of Hamilton cycles in the complete graph, and showed that if $p$ is above the Hamiltonicity threshold, then the discrepancy of Hamilton cycles in $G(n,p)$ is essentially as large as in the complete graph.  

In the present work, we consider an {\em oriented} version of graph discrepancy.
Here, instead of colouring the edges of the graph with two colours,
one considers orientations of the edges. Throughout this paper, we use $\{u,v\}$ to denote an unordered edge (or pair), and $(u,v)$ to denote the directed edge from $u$ to $v$. 
Hamilton cycles are naturally suitable for the oriented setting since they have a notion of direction. 
The goal is thus to find a Hamilton cycle with as many edges as possible which are oriented in the direction of the cycle.
More precisely, for an $n$-vertex graph $G$, an orientation $D$ of $G$ and a Hamilton cycle $C = v_1,\dots,v_n,v_1$ in $G$, we say that the edge $\{v_i,v_{i+1}\}$ (with indices taken modulo $n$) is a {\em forward edge of $C$} if $(v_i,v_{i+1}) \in E(D)$. Note that we always fix one of the two directions of the Hamilton cycle $C$. 
Let $\vec{e}_D(C)$ be the number of forward edges. The {\em oriented discrepancy of Hamilton cycles} in $G$ is the maximum $k$ such that for every orientation $D$ of $G$, there is a Hamilton cycle $C$ in $G$ such that $\vec{e}_D(C) \geq \frac{n+k}{2}$. 
Generalising Dirac's theorem~\cite{Dir52}, we propose the conjecture that if $\delta(G) \geq \frac{n+k}{2}$ then the oriented discrepancy of Hamilton cycles in $G$ is at least $k$. It is convenient to state this conjecture in the following \nolinebreak form:
\begin{conjecture}\label{conj:Dirac}
  Let $n\ge 3$
  and let $G$ be an $n$-vertex graph with $\delta(G)\ge n/2$.
  Then, in any orientation of the edges of $G$,
  there exists a Hamilton cycle in which at least $\delta(G)$ of the edges are pointing forward.
\end{conjecture}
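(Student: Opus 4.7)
Conjecture~\ref{conj:Dirac} interpolates between Dirac's theorem (recovered by ignoring the orientation) and R\'edei's theorem (recovered when $G=K_n$, where $\delta(G)=n-1$ and a R\'edei Hamilton path in any tournament immediately yields a Hamilton cycle with at least $n-1$ forward edges). Any proof must also respect the extremal examples, such as $K_{n/2,n/2}$ with every edge oriented from one side to the other, in which every Hamilton cycle has exactly $n/2$ forward edges; so the argument must use the minimum-degree hypothesis essentially and cannot afford a constant-factor loss anywhere.

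The plan is a rotation-extension argument based on $2$-opt modifications. Take a Hamilton cycle $C=v_1 v_2\ldots v_n v_1$ in $G$ (which exists by Dirac's theorem) that \emph{maximizes} the number of forward edges $f$. Suppose for contradiction $f<\delta(G)$, so $C$ has more than $n-\delta(G)\leq n/2$ backward edges. Fix a backward edge $\{v_i,v_{i+1}\}$ oriented $v_{i+1}\to v_i$. The $2$-opt that deletes $\{v_i,v_{i+1}\}$ and $\{v_j,v_{j+1}\}$ and inserts $\{v_i,v_j\}$ and $\{v_{i+1},v_{j+1}\}$ (reversing the segment $v_{i+1}\ldots v_j$) produces a new Hamilton cycle whenever both inserted edges lie in $E(G)$; applying $\delta(G)\ge n/2$ to $v_i$ and $v_{i+1}$, a P\'osa-type argument guarantees many admissible indices $j$. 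The goal is to show that at least one such move strictly increases $f$, contradicting extremality of $C$.

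The main obstacle is that $2$-opt reverses the entire segment $v_{i+1}\ldots v_j$, flipping every internal forward edge to backward and vice versa. The net change in $f$ equals (number of inserted cut edges that are forward) $+$ (internal backward edges in the segment) $-$ (internal forward edges in the segment), and achieving a positive value for \emph{some} $j$ seems to need a global averaging argument---say, summing the net gain over all admissible $j$ and exploiting that in total the two possible cut orientations at each of $v_i,v_{i+1},v_j,v_{j+1}$ must balance out---rather than a single clever choice.

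If this bare $2$-opt bookkeeping proves too lossy in certain configurations, I would enrich the toolbox in one of two ways. First, by permitting short $3$-opts, which allow local rearrangement without reversing a long segment, and arguing about endpoint contributions separately. Second, by a tournament-extension strategy: orient every non-edge of $G$ arbitrarily to form a tournament $T$, apply R\'edei to obtain a Hamilton path $P$ in $T$, and reroute the segments of $P$ crossing non-edges of $G$ through $G$-edges, paying at most $n-\delta(G)$ forward edges in total. The hard part there is controlling the number of non-$G$-edges used by $P$, since in the worst case it could exceed the budget $n-\delta(G)$; an averaging over tournament extensions, or a more careful choice of extension guided by $D$ itself, would be the crux.
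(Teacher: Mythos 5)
The statement you are addressing is \cref{conj:Dirac}, which the paper explicitly presents as an \emph{open conjecture}: the paper does not prove it. What the paper proves is the approximate version \cref{thm:Dirac}, which needs the much stronger hypothesis $\delta(G)\ge (n+8k)/2$ to guarantee $(n+k)/2$ forward edges. So there is no ``paper's own proof'' to compare against, and any complete argument you supplied would be a new result. Your submission is, by your own framing, a proof plan rather than a proof, and you honestly flag that both of your main routes have unresolved gaps; as it stands it does not establish the conjecture.

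On the substance of the plan: your diagnosis of the $2$-opt obstacle is accurate and, in my view, fatal to that route without a substantially new idea. Reversing the segment $v_{i+1}\dots v_j$ flips all of its internal edges, and there is no reason the gain on the two cut edges should dominate; worse, the P\'osa/Dirac pigeonhole gives only that \emph{some} admissible $j$ exists for a fixed backward edge $\{v_i,v_{i+1}\}$ (since $|N(v_i)|+|N(v_{i+1})|\ge n$ forces at least one chord pair), not a large set of them over which to average. Averaging over the choice of backward edge $\{v_i,v_{i+1}\}$ as well does not obviously help, because the segment-reversal cost is global and correlated across choices. This is essentially why the paper retreats to an approximate statement and uses a completely different, \emph{local} mechanism: it finds $k$ vertex-disjoint ``good diamonds'' (a $K_4$ minus an edge, oriented so that both ends relate identically to the centre), threads fixed length-$3$ positive paths through them into a Hamilton cycle via P\'osa's path-extension lemma (\cref{lem:path_extension_to_Hamilton_cycle}), and then flips each diamond independently to match the majority direction. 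The crucial feature is that each gadget contributes $+1$ regardless of which global direction is chosen, with no long-segment reversal ever needed; nothing of that flavour appears in your plan, and it is the idea that makes even the approximate result go through. Your second route (extend $D$ to a tournament, apply R\'edei, reroute) also has the gap you name, and it is real: a R\'edei Hamilton path in the extended tournament can use $\Theta(n)$ non-$G$-edges (e.g.\ in the tight example $K_{n/2,n/2}$ with $A\to B$ and non-edges filled in inside $A$ and $B$, a Hamilton path of $T$ can spend long stretches inside one side), far exceeding the rerouting budget $n-\delta(G)$, and R\'edei's theorem gives no control over which edges the path uses. If you want to push on this conjecture, I would suggest looking for a tighter local gadget in the spirit of the good diamonds---something whose flip cost is $O(1)$ but whose availability needs only $\delta(G)\ge n/2$ rather than the $\Theta(kn)$ edge surplus the diamond-counting lemma (\cref{lem:vertex-disjoint good diamonds}) currently requires.
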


If true, \cref{conj:Dirac} would be best possible.
To see this, fix $n\ge 3$ and $\delta\ge n/2$,
and consider the graph $G$ with vertex-partition
$V(G) = A \cup B$, where $|A| = n-\delta$ and $|B| = \delta$.
The edge set is the set of all pairs which touch $B$.
It is easy to see that $\delta(G) = \delta$.
Now, orient all edges from $A$ to $B$ (and orient the edges inside $B$ arbitrarily). 
Consider any Hamilton cycle in $G$, and fix a cyclic direction.
Any edge pointing forward (with respect to that direction) ends at a distinct vertex of $B$;
hence there are at most $|B|=\delta$ such edges. We note that \cref{conj:Dirac} holds when $G$ is a complete graph due to the fact that every tournament has a directed Hamilton path.

Our first result is an approximate version of \cref{conj:Dirac}.
\begin{theorem}\label{thm:Dirac}
  Let $k \geq 0$
  and let $G$ be an $n$-vertex graph with $n \geq 30 + 4(k-1)$ and $\delta(G)\ge \frac{n + 8k}{2}$.
  Then in every orientation of $G$, there is a Hamilton cycle with at least $\frac{n+k}{2}$ edges in the same direction.  
\end{theorem}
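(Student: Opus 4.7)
The plan is to consider a Hamilton cycle maximizing the forward-edge count and derive a contradiction should that maximum fall below $(n+k)/2$. By Dirac's theorem, which applies since $\delta(G) \geq (n+8k)/2 \geq n/2$, $G$ admits Hamilton cycles; among all Hamilton cycles and their directions I choose $C = v_1, v_2, \ldots, v_n, v_1$ maximizing $\vec{e}_D(C)$. By choice of direction $\vec{e}_D(C) \geq n/2$; suppose for contradiction $\vec{e}_D(C) < (n+k)/2$, so the number of backward edges $b(C) := n - \vec{e}_D(C)$ exceeds $(n-k)/2$.

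The main step is to find a local modification of $C$ that strictly increases $\vec{e}_D$, contradicting maximality. The primary modification I use is a \emph{vertex re-insertion}: pick a vertex $v$ whose two $C$-neighbors $u$ (predecessor) and $w$ (successor) satisfy $u \in N^+(v)$ and $w \in N^-(v)$---equivalently, both $C$-incident edges at $v$ are backward---and reinsert $v$ between some other consecutive pair $(x, y)$ on $C$ with $x \in N^-(v)$ and $y \in N^+(v)$. Provided $\{u, w\} \in E(G)$, so that the cycle can close after $v$ is removed, a direct calculation gives $\Delta \vec{e}_D \geq 1$, with gain up to $3$ in favourable cases. When no such $v$ exists, all backward edges must be isolated (alternating with forward ones), in which case I instead use a P\'osa-style 2-opt between two backward edges, whose effect on $\vec{e}_D$ must be calculated with a segment-reversal correction.

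The crux is to show that under the hypotheses $b(C) > (n-k)/2$ and $\delta(G) \geq (n+8k)/2$, at least one of the above modifications succeeds. I anticipate a counting/pigeonhole argument. Letting $r$ denote the number of maximal backward runs on $C$, there are $b(C) - r$ vertices with both $C$-incident edges backward; when this is positive the degree hypothesis guarantees, for each such $v$, enough out- and in-neighbors distributed along $C$ to furnish both a valid insertion pair $(x, y)$ and a closing edge $\{u, w\}$. In the complementary regime $b(C) = r$ (isolated backward edges), the 2-opt option is invoked, and the high minimum degree again supplies the needed flexibility. The factor $8$ in $(n+8k)/2$ aggregates the degree budgets required to defeat several independent obstructions in this case analysis, and the hypothesis $n \geq 30 + 4(k-1)$ is a technical lower bound ensuring that $C$ is long enough for the positions involved in the modification to be chosen suitably apart. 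The main obstacle is precisely this case analysis, especially the isolated-backward-edges regime whose 2-opt incurs a segment-reversal term that complicates the accounting of $\vec{e}_D$.
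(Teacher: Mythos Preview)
Your proposal is a sketch rather than a proof, and the gaps you yourself flag are genuine, not routine.

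The paper takes a completely different route. It defines a \emph{good diamond}: a copy of $K_4$ minus an edge, with ends $c,d$ and centre $a,b$, oriented so that $c$ and $d$ relate identically to $\{a,b\}$; any such diamond contains both a positive length-$3$ $c$--$d$ path and a positive length-$3$ $d$--$c$ path. Using the Edwards/Khad\v{z}iivanov--Nikiforov theorem (an edge lying in at least $n/6$ triangles once $e(G)>\lfloor n^2/4\rfloor$) together with pigeonhole over the four orientation types, the paper finds $k$ vertex-disjoint good diamonds; this is exactly where the hypotheses $n\ge 30+4(k-1)$ and the $8k$ excess in $\delta(G)$ are spent. A lemma of P\'osa (a Hamilton cycle through any prescribed linear forest of at most $3k$ edges, available since $\delta(G)\ge (n+3k)/2$) then threads a Hamilton cycle $H$ through one length-$3$ path in each diamond. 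After fixing the direction of $H$ by majority on the $n-3k$ non-diamond edges, any diamond whose path is traversed negatively is swapped to its other length-$3$ path, which is positive in that direction. Each diamond thus contributes at least $2$ forward edges out of $3$, giving at least $(n-3k)/2+2k=(n+k)/2$ forward edges.

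In your plan the crucial existence steps are asserted but not established. For the re-insertion move you need, for \emph{some} double-backward vertex $v$, simultaneously the chord $\{u,w\}\in E(G)$ and a consecutive cycle pair $(x,y)$ with $x\in N^-(v)$, $y\in N^+(v)$; the degree hypothesis does not force $\{u,w\}\in E(G)$ for a given $v$, and you give no argument that among the $b(C)-r$ candidates (which can be arbitrarily small here) one must work. The isolated-backward branch is worse: a $2$-opt reverses an entire segment, flipping forward/backward on every interior edge, and in this regime essentially every segment has a forward majority, so the reversal term is typically a loss that two new chords cannot offset. You correctly identify this as the main obstacle; without a structural gadget that reroutes only a bounded number of edges and is guaranteed positive in either traversal direction --- precisely what the good diamond supplies --- I do not see how to push this branch through.
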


As mentioned above, \cref{conj:Dirac} can be viewed as an extension of Dirac's theorem, stating that Dirac's theorem holds in a ``robust way". 
Namely, not only do graphs with large minimum degree (above $n/2$) have Hamilton cycles, but the collection of Hamilton cycles is robust enough to guarantee the existence of an unbalanced Hamilton cycle in every edge-orientation. 
It is worth mentioning that there is by now an extremely rich literature (certainly too rich to be comprehensively surveyed here) on extensions and generalisations of Dirac's theorem in various directions.
We refer the reader to the surveys of Gould~\cites{Gou91,Gou02,Gou14} and of K\"uhn and Osthus~\cites{KO12,KO14}, as well as the survey of Sudakov~\cite{Sud17} focusing on robustness-type results. 

~

As our second result, we study oriented discrepancy of Hamilton cycles in random graphs.
We show that if the edge probability $p$ is above the Hamiltonicity threshold,
then with high probability\footnote{i.e., with probability tending to $1$ is $n$ tends to infinity} (\whp{} for short), in every orientation of $G(n,p)$ there is a Hamilton cycle with {\em almost all} edges oriented in the same direction.
\begin{theorem}\label{thm:Gnp}
  Let $n$ be an integer and let $p\ge (\log{n}+\log{\log{n}}+\omega(1))/n$.
  Then $G\sim G(n,p)$ is \whp{} such that
  in any orientation of its edges
  there exists a Hamilton cycle with at least $(1-o(1))n$ edges in the same direction.
\end{theorem}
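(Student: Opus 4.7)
The plan is to combine the Gallai--Milgram theorem with a two-round edge exposure (sprinkling). Since the property in question is monotone increasing in the edge set, we may assume $p = (\log n + \log\log n + \omega(1))/n$. Split $p = p_1 + p_2$ so that $p_1$ remains above the Hamiltonicity threshold but $p_2 n \to \infty$; then $G \sim G(n,p)$ couples as $G_1 \cup G_2$ with independent $G_i \sim G(n, p_i)$, and any adversarial orientation $D$ of $G$ restricts to orientations $D_1, D_2$ of $G_1, G_2$.

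The first step is to extract a small directed path cover from $D_1$ via Gallai--Milgram: every digraph admits a vertex-disjoint directed path cover of size at most the independence number of its underlying graph. A standard first-moment calculation gives $\alpha(G_1) = O(n\log\log n / \log n) = o(n)$ \whp{}, so $D_1$ admits a path cover $\mathcal{P} = \{P_1, \ldots, P_k\}$ with $k = o(n)$. The internal edges of these paths, totalling $n - k$, all point forward in $D$.

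The second step is to cyclically link $P_1, \ldots, P_k$ into a Hamilton cycle using edges of $G$. If we can arrange the paths in a cyclic order and connect consecutive ones by $k$ edges of $G$ (possibly backward in $D$), the conclusion follows: the resulting Hamilton cycle has at least $n - k = (1-o(1))n$ forward edges. To carry out this linking despite the sparsity of $G_2$, I would employ an absorbing-path approach: reserve a random subset $R \subset V$ of size $\eta n$ for small $\eta > 0$, construct inside $G_2[R]$ an absorbing structure that can incorporate any small set of leftover vertices while introducing $O(1)$ backward edges per absorbed vertex, apply Gallai--Milgram on $V \setminus R$, and use the absorber to perform the cyclic closure and swallow $R$.

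The main obstacle is the linking step. A naive approach---finding a Hamilton cycle in the auxiliary graph whose vertices are the $k$ paths and whose edges reflect single-edge $G_2$-connections of endpoints---fails at the Hamiltonicity threshold: the auxiliary graph's expected edge density $p_2 k$ falls far below its own Hamiltonicity threshold $\log k$. The absorbing method sidesteps this by letting the reserved set $R$ provide flexibility, but the technical challenge is designing the absorber so that its rewirings respect the orientation constraint and contribute only $o(n)$ backward edges in total.
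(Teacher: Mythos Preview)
Your outline stops precisely where the real work begins. The Gallai--Milgram step is fine and yields a directed path cover of size $k = O(n\log\log n/\log n)$, but you yourself identify the linking of these $k$ paths into a Hamilton cycle as ``the main obstacle'' and then offer only a sketch: reserve a random set $R$, build an unspecified absorber in $G_2[R]$, and use it for ``cyclic closure''. This is not a proof. The difficulty is structural, not technical: the $2k$ path endpoints are dictated by the adversary's orientation, and at the Hamiltonicity threshold you have essentially no surplus edges with which to route between prescribed pairs of vertices. An absorber that swallows \emph{arbitrary} leftover vertices is a different (and easier) object than one that links \emph{prescribed} terminals in a \emph{prescribed} cyclic pattern, and you give no construction for the latter. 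Moreover, the edges used inside $R$ are $\Theta(|R|)$ many and carry no orientation guarantee, so you need $|R| = o(n)$, which makes building any absorber in $G_2[R]$ harder still.

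The paper avoids this trap by inverting your trade-off. Rather than a few long directed paths (which give a sparse, hard-to-link auxiliary graph), it first proves a deterministic statement about $\beta$-graphs: greedily extract many \emph{short} directed paths via the Gallai--Hasse--Roy--Vitaver theorem, then link them using a DFS-type lemma on the auxiliary digraph of path-endpoints, which is now dense enough because there are $\Theta(\sqrt{\beta}\,n)$ paths. This yields a single almost-spanning path with $o(n)$ backward edges. The extension to a Hamilton cycle is then handled by an orientation-\emph{independent} absorption lemma (any short-enough path in a designated large set $V'$ extends to a Hamilton cycle), so the Hamiltonicity machinery never has to interact with the orientation. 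Your scheme entangles the two and leaves the entanglement unresolved.
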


The proof of \cref{thm:Gnp} combines machinery from our earlier paper~\cite{GKM22+}
with a result on the oriented discrepancy of almost-spanning paths in sparse expanders (see \cref{thm:beta:path}).
In the special case of random graphs, this result is as follows.
\begin{theorem}\label{thm:Gnp:path}
  For every $\delta > 0$ there is $C > 0$ such that $G \sim G(n,C/n)$ \whp{} satisfies the following:
  in any orientation of the edges of $G$
  there exists a path of length at least $(1-\delta)n$ with at most $\delta n$ edges going backwards.
\end{theorem}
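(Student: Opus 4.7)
The strategy is to deduce \cref{thm:Gnp:path} from \cref{thm:beta:path}, which handles the same question in general sparse expanders. The reduction has two essentially separate steps: first, verify that $G(n,C/n)$ is whp a sufficiently good expander for the hypothesis of \cref{thm:beta:path} to apply, and second, apply the expander result.

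For the expansion step, given $\delta > 0$, let $(\alpha, d)$ be the expansion parameters required by \cref{thm:beta:path} to produce a path of length $(1-\delta)n$ with at most $\delta n$ backward edges. The plan is to choose the constant $C = C(\delta)$ large enough so that $G \sim G(n, C/n)$ whp satisfies the following standard $(\alpha n, d)$-expansion property: every set $S \subseteq V(G)$ with $|S| \le \alpha n$ satisfies $|N_G(S) \sm S| \ge d|S|$. This is a classical consequence of Chernoff bounds together with a union bound over all subsets of $V(G)$ of a given size; the bad event can be upper bounded by $\binom{n}{s}\binom{n}{ds}\pr[\mathrm{Bin}(s(n-(d+1)s),\, C/n) \le ds]$, which is negligible for $C$ sufficiently large in terms of $\alpha$ and $d$. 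Some care is needed for small sets (say $|S| \le \log n$), which is the standard dichotomy in random graph expansion arguments.

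Once this is established, \cref{thm:beta:path} is directly applicable: for any orientation of the edges of $G$, it produces a path of length at least $(1-\delta)n$ with at most $\delta n$ edges going backwards, as required. The proof essentially reduces, therefore, to verifying the expansion profile of $G(n, C/n)$ for constant $C$, which is a routine computation.

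The true difficulty is contained inside \cref{thm:beta:path}, not in the reduction: one must produce a long, mostly-forward path in an arbitrary expander. I would expect the proof of \cref{thm:beta:path} to proceed via a P\'osa-style rotation-extension argument, with orientations tracked carefully. The main obstacle there is that each rotation reverses the direction of a segment of the current path, so the naive bookkeeping of forward versus backward edges deteriorates quickly. To beat this, one likely maintains an invariant that the current path has only few backward edges, and uses the expansion to find a rotation that extends the path (or enlarges the set of endpoints) without creating too many new backward edges, amortizing the cost over the whole construction.
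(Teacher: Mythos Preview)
Your high-level plan --- reduce \cref{thm:Gnp:path} to \cref{thm:beta:path} by checking that $G(n,C/n)$ has the required expansion --- is exactly what the paper does. But two points of your execution do not match the paper, and the first is a genuine gap.

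First, the hypothesis of \cref{thm:beta:path} is \emph{not} an $(\alpha n,d)$ vertex-expansion condition. It is the $\beta$-graph property: there is an edge between any two disjoint sets of size at least $\beta n$. This distinction matters, because the vertex-expansion property you try to verify is simply false for $G(n,C/n)$ with constant $C$: a positive fraction (roughly $e^{-C}n$) of the vertices are isolated, and a singleton set containing an isolated vertex has $|N(S)\sm S|=0$. Your remark that ``some care is needed for small sets'' does not rescue this; it is not the usual low-degree issue one patches at $p$ above the connectivity threshold, it is a structural obstruction at $p=C/n$. By contrast, the $\beta$-graph property is insensitive to low-degree vertices and follows from a one-line Chernoff plus union-bound computation, which is all the paper needs. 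So the reduction is correct, but you must plug in the right expansion hypothesis.

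Second, your speculation about the proof of \cref{thm:beta:path} is off. The paper does not use P\'osa rotation--extension at all (indeed, rotations would be awkward precisely for the reason you identify: they reverse segments). Instead, the argument is: (i) repeatedly apply the Gallai--Hasse--Roy--Vitaver theorem to extract many vertex-disjoint \emph{directed} paths, each of length roughly $1/\sqrt{\beta}$, using that a $\beta$-graph has no large independent set and hence large chromatic number on any large induced subgraph; (ii) build an auxiliary digraph on these paths, with an arc $i\to j$ whenever the last vertex of $P_i$ is adjacent in $G$ to the first vertex of $P_j$; (iii) apply a DFS-type lemma (\cref{lem:DFS}) to find a long directed path in this auxiliary digraph; (iv) concatenate. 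The backward edges in the final path come only from the connecting edges between consecutive $P_i$'s, of which there are at most $m \le O(\sqrt{\beta})n$.
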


\Cref{thm:Gnp:path} has the following interesting implication to unbalanced Hamilton cycles in tournaments.
It is a well-known fact \cite{Redei} that every tournament has a directed Hamilton path. Hence, in any tournament, there is a Hamilton cycle (of the underlying complete graph) in which all edges but at most one go in the same direction.
It may be, however, that a tournament has only one directed Hamilton path (this is the case in the transitive tournament). It is then natural to relax the notion of ``imbalance'' by considering Hamilton cycles which have up to $\delta n$ edges in the opposite direction, and ask how many such unbalanced Hamilton cycles are guaranteed to exist. The following corollary addresses this question.

\begin{corollary}\label{cor:multiplicity}
  For every $\delta > 0$ there is $C > 0$ such that every tournament $T$ on $n$ vertices contains at least $C^{-n}n^n$ Hamilton cycles
  (of the underlying complete graph), each of which has at least $(1 - \delta)n$ edges in the same direction.
\end{corollary}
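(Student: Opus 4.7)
The plan is to use \cref{thm:Gnp:path} to extract a single almost-spanning directed path $P$ with few backward edges from a random sparse subgraph of $T$, and then use a double-counting argument to turn $P$ into $C^{-n} n^n$ Hamilton cycles of $K_n$, each with at most $\delta n$ backward edges.

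Set $\delta' := \delta/4$, let $C_0 = C_0(\delta')$ be the constant supplied by \cref{thm:Gnp:path}, and sample $G \sim G(n, C_0/n)$. Viewing $T$ as an orientation of $K_n$, \cref{thm:Gnp:path} guarantees that with probability $1 - o(1)$ the orientation of $G$ induced by $T$ admits a directed path of length at least $(1-\delta')n$ with at most $\delta' n$ backward edges; by truncation I may assume this path has the form $P = v_1 v_2 \cdots v_m$ with $m := \lceil (1-\delta')n \rceil$, and I set $U := V(T) \setminus V(P)$, so $|U| \leq \delta' n$.

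I first claim that \emph{every} Hamilton cycle $H$ of $K_n$ in which $v_1, \ldots, v_m$ appear in this cyclic forward order has at most $\delta n$ backward edges. Indeed, the edges of $H$ that are also edges of $P$ (namely, those where $v_i$ and $v_{i+1}$ remain adjacent in $H$) contribute at most $\delta' n$ backward edges, while every other edge of $H$ is incident to some vertex of $U$; a direct count shows that there are at most $2|U|+1 \leq 2\delta' n + 1$ such bridging edges. Hence the total backward count is at most $3\delta' n + 1 \leq \delta n$ for $n$ large.

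Now observe that the number of Hamilton cycles of $K_n$ (with a chosen cyclic direction) in which $v_1, \ldots, v_m$ appear as a cyclic forward subsequence is exactly $\binom{n-1}{m-1}\cdot|U|! = (n-1)!/(m-1)!$, so each sample of $G$ for which the good path $P$ exists produces at least this many unbalanced Hamilton cycles. Let $N$ denote the total number of Hamilton cycles of $K_n$ (with a chosen cyclic direction) having at most $\delta n$ backward edges. By double counting,
\[
  \tfrac{1}{2}\cdot \frac{(n-1)!}{(m-1)!} \;\leq\; \sum_H \pr[G \text{ produces } H] \;\leq\; N \cdot \max_H \pr[G \text{ produces } H].
\]
For each fixed $H$, a union bound over the $n\binom{n-1}{m-1}$ length-$m$ paths whose vertex sequences form a cyclic forward subsequence of $H$ gives $\pr[G \text{ produces } H] \leq n\binom{n-1}{m-1}(C_0/n)^{m-1}$, and rearranging yields $N \geq (n-m)!/(2n \cdot (C_0/n)^{m-1})$. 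By Stirling with $n-m = \Theta(\delta n)$, this is at least $C^{-n} n^n$ for a suitable constant $C = C(\delta)$.

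The main obstacle is the bookkeeping in the claim: one has to verify that no matter how the vertices of $U$ are interleaved among $v_1, \ldots, v_m$ in $H$, the number of bridging edges incident to $U$ is controlled linearly by $|U|$, so that even in the worst case -- when all of these bridging edges are backward -- the total backward count stays below $\delta n$.
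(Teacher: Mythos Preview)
Your proof is correct and essentially the same as the paper's: both use \cref{thm:Gnp:path} together with a first-moment/double-counting argument over $G\sim G(n,C_0/n)$ to turn the existential statement into a count. The only cosmetic difference is that the paper first lower-bounds the number $|\mathcal{P}|$ of good paths (via $\mathbb{E}[\#\{P\in\mathcal{P}:P\subseteq G\}]\ge 1/2$) and then extends each by appending the leftover vertices contiguously, whereas you allow the leftover vertices to be interleaved and fold the two counting steps into a single inequality; the arithmetic is the same either way.
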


\Cref{cor:multiplicity} is optimal up to the value of the constant $C$. Indeed, let $T$ be the random tournament where each edge is oriented randomly (with probability $1/2$ for each direction). For every Hamilton cycle $C$ in the underlying complete graph of $T$, the probability that $C$ has at least $(1/2 + \varepsilon)n$ forward edges is at most $e^{-\Omega(\varepsilon^2 n)}$, by the Chernoff bound. Hence, the expected number of Hamilton cycles having at least $0.51n$ (say) forward edges, is at most $e^{-\Omega(n)}n!$. By a similar consideration, the expected number of Hamilton cycles having at least $(1-\delta)n$ edges in the same direction is at most $c^{-n} n!$ where $c = 2^{1 - O(\delta\log(1/\delta))}$.
Determining the correct dependence of $C$ on $\delta$ in \cref{cor:multiplicity} would be interesting. 

~

\Cref{thm:Dirac} is proven in the next section,
\cref{thm:Gnp,thm:Gnp:path} are proven in \cref{sec:random}
and \cref{cor:multiplicity} is proven in \cref{sec:tournament}.

\section{Unbalanced Hamilton cycles in Dirac graphs}
A \defn{diamond} is the graph with vertices $a,b,c,d$ and edges $\{a,b\},\{a,c\},\{a,d\},\{b,c\},\{b,d\}$.
We call $c,d$ the \defn{ends} of the diamond, and $a,b$ the \defn{centre} of the diamond.
Given an orientation of its edges,
a diamond is called \defn{good} (with respect to that orientation) if $c$ and $d$ relate in the same way to $a,b$.
(The direction of the edge $\{a,b\}$ will not be important, as the roles of $a,b$ are the same.
For convenience, we assume that $a \rightarrow b$.)
There are four good diamonds, depicted in \cref{fig:diamonds}.

\begin{figure*}[t!]
  \captionsetup{width=0.879\textwidth,font=small}
  \centering
  \includegraphics{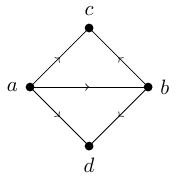}
  \hspace{0.5cm}
  \includegraphics{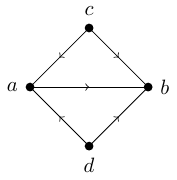}
  \hspace{0.5cm}
  \includegraphics{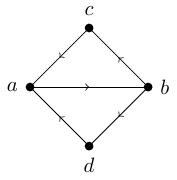}
  \hspace{0.5cm}
  \includegraphics{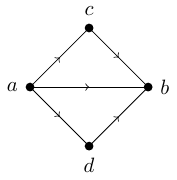}
  \caption{The four good diamonds.}
  \label{fig:diamonds}
\end{figure*}

We say that a path is \defn{positive} (with respect to an orientation) if it consists of more forward edges than backward edges when traversing the path from its first vertex to its last.
Similarly, it is \defn{negative} if it consists of more backward edges than forward edges.
The following is the key property of good diamonds that we will use. It can easily be verified by hand (see \cref{fig:diamonds}).
\begin{observation}\label{obs:good diamond paths}
Let $D$ be a good diamond with ends $c,d$. Then $D$ contains a positive path of length 3 from $c$ to $d$, as well as a positive path of length 3 from $d$ to $c$.
\end{observation}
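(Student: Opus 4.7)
The statement is a finite check on the four diamonds in \cref{fig:diamonds}, so the plan is purely case analysis. Since $a\to b$ is fixed, a good diamond is determined by the common way in which $c$ and $d$ attach to the pair $\{a,b\}$: (i) both are sinks ($a\to c,\,b\to c$ and similarly at $d$), (ii) both are sources, (iii) both have $a\to \cdot \to b$ orientation, or (iv) both have $b\to \cdot \to a$ orientation. These are precisely the four diamonds in the figure.

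For each type, the two length-$3$ candidate paths from $c$ to $d$ are $c,a,b,d$ and $c,b,a,d$; one of the two traverses $\{a,b\}$ forwards and the other backwards. Combined with the identical relation of $c$ and $d$ to $\{a,b\}$, in every case at least one candidate contains at least two forward edges, and I would simply display the winning candidate per type. For instance, in type (i) the path $c,a,b,d$ has edges $c\to a$ (backward, since $a\to c$), $a\to b$ (forward), $b\to d$ (forward), hence is positive; the other three types are handled identically, and in type (iv) the path $c,a,b,d$ even has all three edges forward.

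For the $d\to c$ direction, I would invoke the obvious automorphism of a good diamond swapping $c \leftrightarrow d$ (which preserves each of the four orientation types). This transports the positive $c$-to-$d$ path already exhibited to a positive $d$-to-$c$ path, so no additional calculation is needed. The only ``obstacle'' here is bookkeeping: ensuring that all four types are treated and that the edges of each claimed path really lie in the diamond. With only four cases and two candidates per direction, this is routine and the entire proof fits in a short table or a one-sentence verification per case.
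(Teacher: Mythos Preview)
Your proposal is correct and follows essentially the same approach as the paper, which simply states that the observation ``can easily be verified by hand'' against the four diamonds in \cref{fig:diamonds}. Your case analysis makes this verification explicit, and the $c\leftrightarrow d$ automorphism is a nice way to halve the checking (it works precisely because in a good diamond $c$ and $d$ have identical relations to $\{a,b\}$).
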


\begin{lemma}\label{lem:one good diamond}
  Let $G$ be a graph with $|V(G)| = n \geq 30$ vertices $e(G) \geq \lfloor \frac{n^2}{4} \rfloor +1$.
  Then every orientation of $G$ contains a good diamond. 
\end{lemma}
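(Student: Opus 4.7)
The plan is to combine Erd\H{o}s's book theorem with a pigeonhole argument on orientation patterns. Recall that a \emph{book of size $k$} based at an edge $\{a,b\}$ consists of the edge $\{a,b\}$ together with $k$ triangles sharing it; equivalently, it is specified by a subset $S\subseteq N(a)\cap N(b)$ with $|S|=k$. Crucially, any two distinct ``pages'' $c,d\in S$, together with $\{a,b\}$, span a diamond in the sense of the lemma, with centre $\{a,b\}$ and ends $c,d$. Hence the task reduces to finding an edge whose book is large enough to force, by pigeonhole, two pages with the same attachment pattern to the spine.

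First I would invoke the classical book theorem of Erd\H{o}s, proved independently by Edwards and by Khad\v{z}iivanov--Nikiforov: every $n$-vertex graph with more than $\lfloor n^2/4\rfloor$ edges contains a book of size at least $n/6$. Applied to $G$, this produces an edge $\{a,b\}\in E(G)$ with $|N(a)\cap N(b)|\ge n/6$, and since $n\ge 30$, the common neighbourhood $S:=N(a)\cap N(b)$ has size at least $5$. Orient $\{a,b\}$ as $a\to b$ without loss of generality. For each $c\in S$, the pair of orientations of the two diamond edges $\{a,c\}$ and $\{b,c\}$ takes one of four possible values; call this pair the \emph{attachment type} of $c$. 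Since $|S|\ge 5>4$, the pigeonhole principle yields distinct $c,d\in S$ sharing an attachment type, which is exactly the condition that $c$ and $d$ relate to $a,b$ in the same way. Thus the diamond on $\{a,b,c,d\}$ is good, as required.

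There is no real obstacle here: the argument is a one-line pigeonhole estimate once the book theorem is in hand. The only point requiring care is to cite the sharp form of the book theorem with the constant $1/6$, which is exactly what turns the edge-count hypothesis $e(G)\ge \lfloor n^2/4\rfloor+1$ into the numerical requirement $n\ge 30$ needed to pack five pages into four attachment buckets.
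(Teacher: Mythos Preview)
Your proof is correct and follows essentially the same approach as the paper: invoke the Edwards/Khad\v{z}iivanov--Nikiforov book theorem to find an edge lying in at least $n/6\ge 5$ triangles, then pigeonhole the five common neighbours into the four possible attachment patterns to $\{a,b\}$ to obtain a good diamond. The only minor quibble is attribution---the sharp $n/6$ bound is due to Edwards and Khad\v{z}iivanov--Nikiforov (Erd\H{o}s had $n/6-O(1)$)---but this does not affect correctness.
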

\begin{proof}
Edwards~\cite{Edw77} and independently Khad\v{z}iivanov and Nikiforov~\cite{KN79} proved that a graph with $n$ vertices and $\lfloor \frac{n^2}{4} \rfloor +1$ edges has an edge which is contained in at least $n/6$ triangles (earlier, Erd\H{o}s~\cite{Erd62} proved a bound of $n/6 - O(1)$).
So let $\{a,b\} \in E(G)$ be an edge contained in at least $n/6$ triangles,
and let $x_1,\dots,x_m$,
$m \geq n/6 \geq 5$ be such that $a,b,x_i$ is a triangle in $G$.
Fix any orientation of $G$.
By the pigeonhole principle,
there are $1 \leq i < j \leq m$ such that $x_i,x_j$ relate in the same way (in the orientation) to $a,b$.
Then $a$, $b$, $c := x_i$ and $d := x_j$ form a good diamond in $G$,
as required.
\end{proof}
\begin{lemma}\label{lem:vertex-disjoint good diamonds}
  Let $k \geq 1$, and let $G$ be a graph with $|V(G)| = n \geq 30 + 4(k-1)$ and $e(G) \geq \frac{n^2}{4} + 2(k-1)n - 4k^2 + 6k - 1$.
  Then every orientation of $G$ contains $k$ vertex-disjoint good diamonds. 
\end{lemma}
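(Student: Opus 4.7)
The plan is to prove the lemma by induction on $k$, where the base case $k=1$ is essentially \cref{lem:one good diamond} and the inductive step removes a single good diamond and reapplies the lemma to the smaller graph.

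\textbf{Base case.} For $k=1$ the hypothesis reads $e(G) \ge \frac{n^2}{4} + 1 \ge \lfloor\frac{n^2}{4}\rfloor + 1$ and $n\ge 30$, so \cref{lem:one good diamond} directly produces a good diamond.

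\textbf{Inductive step.} Suppose the statement is known for $k-1$, and let $G$ and an orientation be as in the hypothesis for $k$. Since
\[
e(G) \ge \tfrac{n^2}{4} + 2(k-1)n - 4k^2 + 6k - 1 \ge \tfrac{n^2}{4} + 1
\]
(using $k\ge 1$ and $n\ge 30 + 4(k-1)$ to absorb the remaining terms), \cref{lem:one good diamond} gives a good diamond $D_1$ in $G$. Let $G' = G - V(D_1)$, so $n' := |V(G')| = n-4$ and
\[
e(G') \ge e(G) - \bigl(4(n-4) + \tbinom{4}{2}\bigr) = e(G) - (4n - 10).
\]
We then need to check that $G'$ satisfies the hypothesis of the lemma with parameter $k-1$, i.e., that $n' \ge 30 + 4(k-2)$ and $e(G') \ge \frac{(n')^2}{4} + 2(k-2)n' - 4(k-1)^2 + 6(k-1) - 1$. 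The first is immediate from $n\ge 30 + 4(k-1)$. For the second, a direct expansion (using $(n-4)^2/4 = n^2/4 - 2n + 4$) shows that the required lower bound on $e(G')$ is precisely
\[
\tfrac{n^2}{4} + 2(k-1)n - 4k^2 + 6k - 1 \,-\, (4n - 10),
\]
which is exactly what the hypothesis on $e(G)$ delivers after subtracting the $4n-10$ edges lost when removing $V(D_1)$. By the inductive hypothesis, $G'$ contains $k-1$ vertex-disjoint good diamonds $D_2,\dots,D_k$; together with $D_1$ these give the required $k$ vertex-disjoint good diamonds in $G$.

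\textbf{Main obstacle.} The only real difficulty is the arithmetic balancing in the inductive step: the somewhat unusual constants $2(k-1)n - 4k^2 + 6k - 1$ are chosen exactly so that after deleting four vertices (which removes up to $4n-10$ edges) and decrementing $k$ by $1$, the hypothesis transforms into itself with parameter $k-1$. Once this bookkeeping is carried out, nothing else is needed, so I would take some care to present the algebra transparently (for instance by writing the edge-loss bound $4n-10 = 4(n-4) + \binom{4}{2}$ explicitly) rather than hiding it inside a chain of inequalities.
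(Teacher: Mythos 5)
Your proof is correct and takes the same route as the paper: induction on $k$, using the single-diamond lemma to extract one good diamond, then deleting its four vertices (losing at most $4(n-4)+\binom{4}{2}=4n-10$ edges) and checking that the edge-count hypothesis reproduces itself with $k$ replaced by $k-1$. The paper's proof is exactly this, with the same arithmetic.
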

\begin{proof}
The proof is by induction on $k$.
Fix any orientation of $G$. It is easy to check that $e(G) \geq n^2/4 + 1$ by the assumptions of the lemma. 
By \cref{lem:one good diamond}, $G$ contains a good diamond $D$ (this also establishes the base case $k = 1$).
Let $G'$ be the subgraph of $G$ induced by $V(G) \sm V(D)$.
We have $|V(G')| = n - 4 \geq 30 + 4(k-2)$ and 
\[
  \begin{aligned}
    e(G')
    &\geq e(G) - 6 - 4(n-4)
    = e(G) - (4n-10)
    \geq \frac{n^2}{4} + 2(k-1)n - 4k^2 + 6k - 1 - (4n-10)\\
    &= \frac{(n-4)^2}{4} + 2(k-2)(n-4) - 4(k-1)^2 + 6(k-1) - 1.
  \end{aligned}
\]
By the induction hypothesis, $G'$ contains $k-1$ vertex-disjoint good diamonds, finishing the \nolinebreak proof.
\end{proof}

We remark that \cref{lem:vertex-disjoint good diamonds} is optimal in the following sense.
Consider the complete $n$-vertex tripartite graph $G=(A\cup B\cup C,E)$ where $|A|=k-1$ and $|B|,|C|$ are nearly equal.
It is not hard to check that $G$ has $n^2/4+\Theta(kn)$ edges but at most $k-1$ vertex-disjoint diamonds.

The last ingredient in the proof of Theorem \ref{thm:Dirac} is the following lemma due to P\'osa. 
\begin{lemma}[\cite{Pos63}]\label{lem:path_extension_to_Hamilton_cycle}
	Let $t \geq 0$ and let $G$ be a graph with $n$ vertices and minimum degree at least $\frac{n+t}{2}$. Let $E \subseteq E(G)$ be the edge-set of a path forest\footnote{A path forest is a forest in which every connected component is a path.} of size at most $t$. Then there exists a Hamilton cycle in $G$ which uses all edges in $E$.
\end{lemma}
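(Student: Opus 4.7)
The plan is to prove the lemma by induction on $t$, with the base case $t=0$ being precisely Dirac's theorem. For the inductive step, I would select an edge $e=\{u,v\}\in E$ and form the contracted graph $G/e$ by identifying $u$ and $v$ into a single vertex $w$, keeping only one copy of any parallel edges that arise. Every vertex outside $\{u,v\}$ loses at most one incident edge during contraction, and $w$ itself has at least $\max\{\deg_G(u),\deg_G(v)\}-1$ neighbours in $G/e$, so
\[
  \delta(G/e)\ge \frac{n+t}{2}-1 = \frac{(n-1)+(t-1)}{2}.
\]
The set $E':=E\sm\{e\}$, viewed inside $G/e$ with $u,v$ collapsed to $w$, remains a path-forest of size at most $t-1$ (the path of $E$ containing $e$ simply shortens). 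The induction hypothesis applied to $(G/e,E')$ then supplies a Hamilton cycle $H'$ in $G/e$ containing every edge of $E'$.

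The remaining task is to lift $H'$ to a Hamilton cycle of $G$ containing $E$. Let $a,b$ denote the two neighbours of $w$ on $H'$; each of the edges $\{a,w\},\{b,w\}$ of $G/e$ arises from an edge of $G$ incident to $u$ or to $v$. If $e$ was an \emph{interior} edge of its path in $E$ — say this path reads $\dots c\,u\,v\,d\dots$ — then $E'$ contains both $\{c,w\}$ and $\{w,d\}$, forcing $\{a,b\}=\{c,d\}$, and the lift is immediate: insert the segment $c\,u\,v\,d$ in place of $w$, using the forced edge $e$ in the middle.

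The main obstacle is the case when $e$ is an isolated edge or a leaf edge of a path in $E$: then at least one of $a,b$ is a \emph{free} neighbour of $w$, and a valid lift requires one of the pairings $\{\{a,u\},\{b,v\}\}$ or $\{\{a,v\},\{b,u\}\}$ to consist of two actual edges of $G$ — which may fail even though $\{a,w\},\{b,w\}\in E(G/e)$ (for instance, $a$ and $b$ might both be adjacent in $G$ only to $u$). In such a bad case, I would perform a local rotation of $H'$ around $w$ to obtain an alternative Hamilton cycle in $G/e$, still containing $E'$ but whose pair of $w$-neighbours does admit a valid lift. Here the extra $+t$ in the minimum-degree assumption is exactly the slack needed: it guarantees enough candidate rotation pivots lying outside the at most $t-1$ protected edges of $E'$ so that at least one rotation succeeds without disturbing $E'$. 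Making this rotation step precise — and checking that it indeed leaves every edge of $E'$ intact — is the technical heart of the proof, and the place where the $(n+t)/2$ bound (rather than $n/2$) is essential.
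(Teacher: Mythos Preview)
The paper does not give its own proof of this lemma; it is cited as a result of P\'osa~[Pos63] and used as a black box. So there is no ``paper proof'' to compare against directly, only the classical argument.

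Your induction-by-contraction scheme is set up correctly --- the minimum degree of $G/e$ really is at least $\tfrac{(n-1)+(t-1)}{2}$, and $E'$ is indeed a path-forest of size at most $t-1$ --- but, as you yourself note, the entire content of the proof lies in the lifting step, which you leave as a sketch. In the problematic case (say $e$ is a pendant edge of its path, so one neighbour of $w$ on $H'$ is forced but the other is free), you need the free neighbour $b$ to lie in $N_G(v)$, while all you know is $b\in N_G(u)\cup N_G(v)$. The proposed fix, rotating $H'$ around $w$, means performing P\'osa rotations on the Hamilton path $H'-w$ with one endpoint fixed, while (i) never breaking an edge of $E'$, and (ii) steering the moving endpoint into $N_G(v)$. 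Condition (i) forbids up to $t-1$ pivot positions, and condition~(ii) is a landing constraint on a set of size roughly $(n+t)/2-1$; combining these with the usual $|N(R)|\le 2|R|-1$ bound is doable but is a genuine argument in its own right, not a detail to be waved at. As written, this is a plan rather than a proof.

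It is worth knowing that the classical proof avoids this difficulty entirely and is much shorter. One argues directly (Ore-style): assume no Hamilton cycle through $E$ exists and pass to a maximal such $G$; then some non-edge $uv$ gives a Hamilton $u$--$v$ path $x_1,\dots,x_n$ containing $E$. Setting $S=\{i:x_i\in N(u)\}$ and $T=\{i:x_{i-1}\in N(v)\}$, any $i\in S\cap T$ with $x_{i-1}x_i\notin E$ yields a Hamilton cycle through $E$, a contradiction. Hence $|S\cap T|\le |E|\le t$, while $|S\cap T|\ge |S|+|T|-(n-1)\ge (n+t)-(n-1)=t+1$. This is where the extra $+t$ in the hypothesis is used, in a single clean counting step rather than through a rotation argument entangled with protected edges.
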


\begin{proof}[Proof of \cref{thm:Dirac}]
If $k = 0$ then the assertion follows immediately from Dirac's theorem. So suppose from now on that $k \geq 1$.
Fix any orientation of $G$.
We have $e(G) \geq \frac{n}{2} \cdot \frac{n+8k}{2} = \frac{n^2}{4} + 2kn \geq \frac{n^2}{4} + 2(k-1)n - 4k^2 + 6k - 1$,
hence we may apply \cref{lem:vertex-disjoint good diamonds}.
 
Let $D_1,\dots,D_k$ be vertex-disjoint good diamonds in $G$.
Let us denote by $c_i,d_i$ the ends of $D_i$.
For each $1 \leq i \leq k$,
let $P_i \subseteq D_i$ be a positive path of length 3 from $c_i$ to $d_i$,
and let $Q_i \subseteq D_i$ be a positive path of length 3 from $d_i$ to $c_i$; such paths exists by \cref{obs:good diamond paths}.
Let $E = \bigcup_{i = 1}^{k}{E(P_i)}$.
So $E$ is the edge-set of a path forest,
and $|E| = 3k$.
By \cref{lem:path_extension_to_Hamilton_cycle},
there is a Hamilton cycle $H$ of $G$ which contains all edges in $E$; in other words,
$H$ contains $P_1,\dots,P_k$ as subpaths.
Fix a direction of $H$ which matches the orientation of the majority of the edges in $E(H) \sm E$.
We now define a Hamilton cycle $H'$ as follows.

For $i = 1,\dots,k$,
we say that $D_i$ {\em matches the direction of $H$} if,
when traversing $P_i \subseteq H$ along the fixed direction of $H$,
we go from $c_i$ to $d_i$ (and not from $d_i$ to $c_i$).
Recall that $P_i$ is positive when traversing it from $c_i$ to $d_i$ (and hence negative when traversing it from $d_i$ to $c_i$).
Let $I$ be the set of $i = 1,\dots,k$ such that $P_i$ does {\em not} match the direction of $H$.
Now let $H'$ be the Hamilton cycle with edge-set $E(H') = 
\left( E(H) \sm \bigcup_{i \in I}{E(P_i)} \right) \cup \bigcup_{i \in I}{E(Q_i)}$.
In other words,
for each $i \in I$,
we replace $P_i$ with $Q_i$.
It is easy to see that $H'$ is indeed a Hamilton cycle,
and that it has at least $\frac{n+k}{2}$ edges in the same direction.
\end{proof}

\section{Unbalanced Hamilton cycles in sparse expanders}
\label{sec:random}
To prove \cref{thm:Gnp} we first prove \cref{thm:beta:path} below,
which extends \cref{thm:Gnp:path} to the more general setting of graphs with certain expansion properties. More precisely, for $\beta > 0$, a graph $G$ is called a {\em $\beta$-graph} if there is an edge in $G$ between every pair of disjoint $U,W \subseteq V(G)$ with $|U|,|W| \geq \beta |V(G)|$. This notion was introduced in \cite{FK21}.
\begin{theorem}\label{thm:beta:path}
  For every $\delta > 0$ there exists $\beta>0$
  such that the following holds.
  Let $G$ be a $\beta$-graph on $n$ vertices. 
  Then, in any orientation of the edges of $G$ there exists a path of length at least $(1-\delta)n$
  with at most $\delta n$ edges going backwards.
\end{theorem}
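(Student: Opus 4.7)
The plan is to combine the P\'osa rotation--extension technique for $\beta$-graphs with a local reversal-swap argument that controls backward edges. Fix $\delta>0$, choose $\beta=\beta(\delta)\ll\delta$ sufficiently small, and let $G$ be a $\beta$-graph on $n$ vertices with an arbitrary orientation. First, by the standard expansion arguments for $\beta$-graphs (cf.\ \cite{FK21}), $G$ contains a path of length at least $(1-\delta/2)n$ in the underlying undirected graph, and, after removing a small exceptional set of size $O(\beta n)$, the remainder enjoys the ``rotation-friendly'' expansion properties one needs to move path endpoints freely.

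Among all paths of length at least $(1-\delta)n$ in $G$, I would select one $P=v_1\ldots v_m$ that maximizes the number of forward edges (choosing the better of the two traversal directions), and claim that $P$ then has at most $\delta n$ backward edges. The key operation for proving this by contradiction is a \emph{reversal swap}: given indices $1\le i<j\le m$ with $v_{i-1}v_j, v_iv_{j+1}\in E(G)$, I reverse the subpath $v_i\ldots v_j$ to obtain a new path $P'=v_1\ldots v_{i-1}v_jv_{j-1}\ldots v_iv_{j+1}\ldots v_m$ of the same length. This swap exchanges forward and backward edges within the reversed segment and replaces the connecting edges $v_{i-1}v_i, v_jv_{j+1}$ by $v_{i-1}v_j, v_iv_{j+1}$. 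A short computation shows that the net gain in forward edges equals the backward-minus-forward excess inside the reversed segment plus the forward-edge difference between the new and old pairs of connecting edges.

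It then remains to exhibit a swap with strictly positive gain whenever $P$ has more than $\delta n$ backward edges. By averaging over starting positions, many subpaths of $P$ contain an excess of backward edges. Simultaneously, the $\beta$-graph property provides an abundance of crossing edges: applied to suitable sets of the form $\{v_{i-1}:i\in I\}$ and $\{v_j:j\in J\}$ (each of size at least $\beta n$), it guarantees many candidate pairs $(i,j)$ for which both connecting edges $v_{i-1}v_j$ and $v_iv_{j+1}$ exist. A double-counting argument over backward-heavy segments on one side and candidate swap endpoints on the other should then locate a pair $(i,j)$ with positive gain, contradicting the maximality of $P$.

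The main obstacle I expect is precisely this double-counting step: one must handle jointly the segment excess (how much more backward than forward the reversed part is) and the orientation of the new versus old connecting edges, and then show that their sum is strictly positive for some pair $(i,j)$. This is likely to require partitioning backward edges by local type---for instance, according to the orientations of their surrounding path edges---and applying the $\beta$-graph property to each class separately, with enough slack in the hierarchy $\beta\ll\delta$ to absorb both the $O(\beta n)$ exceptional vertices from the initial rotation--extension step and the losses coming from unfavourable orientations of the new connecting edges.
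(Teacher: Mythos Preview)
Your proposal has a genuine gap at exactly the place you flag: the existence of a profitable reversal swap. The $\beta$-graph hypothesis is very weak --- it only promises \emph{one} edge between any two sets of size $\beta n$, not many. Thus your assertion that it ``guarantees many candidate pairs $(i,j)$ for which both connecting edges $v_{i-1}v_j$ and $v_iv_{j+1}$ exist'' is unsupported: a single application yields a single edge, and there is no mechanism here for producing the two coordinated edges a swap needs, let alone aligning such a pair with a backward-heavy segment and with favourably oriented new connectors. Since a $\beta$-graph can be as sparse as $G(n,C/n)$ (only linearly many edges), a typical vertex has bounded degree, and the double-counting you sketch has essentially no edges to count. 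The partition-by-local-type idea does not help, because each application of the $\beta$-property to a type class again returns just one edge. In short, maximality under 2-opt reversals on a fixed vertex set does not obviously force few backward edges in this regime, and your outline does not supply the missing idea.

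By contrast, the paper's argument sidesteps local improvement entirely. It observes that a $\beta$-graph has independence number below $2\beta n$, hence any induced subgraph on $\Omega(n)$ vertices has chromatic number $\Omega(1/\beta)$; by the Gallai--Hasse--Roy--Vitaver theorem, every orientation of such a subgraph contains a \emph{fully directed} path of length $\Omega(1/\sqrt{\beta})$. Iterating, one extracts roughly $\sqrt{\beta}\,n$ vertex-disjoint directed paths covering almost all vertices. Finally, the $\beta$-property is applied not edge-by-edge but to an auxiliary digraph on these paths (via a DFS-type lemma) to string almost all of them together; the only possibly backward edges are the $O(\sqrt{\beta}\,n)$ connectors. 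This uses the $\beta$-hypothesis exactly at its strength (one edge between large sets) and never needs two coordinated edges or any averaging over segments.
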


\Cref{thm:Gnp:path} follows from \cref{thm:beta:path} 
by the well-known and simple fact that
for every $\beta>0$ there exists $C>0$ such that $G(n,C/n)$ is \whp{} a $\beta$-graph.
For a proof (of a stronger statement), see e.g.~\cite{GKM22+}*{Lemma 3.4}. 

To prove \cref{thm:beta:path}, we will need the following result from \cite{BKS12}. 
\begin{lemma}[Lemma 4.4 in \cite{BKS12}]\label{lem:DFS}
Let $m,k \geq 1$, let $F$ be a directed graph on $m$ vertices, and suppose that for every pair of disjoint $A,B \subseteq V(F)$ with $|A|,|B| \geq k$, there is an edge of $F$ from $A$ to $B$. Then $F$ has a directed path of length at least $m - 2k + 1$. 
\end{lemma}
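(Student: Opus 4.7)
I propose a directed depth-first search argument. Throughout the algorithm, maintain a tripartition $V(F)=S\cup U\cup T$ of the vertex set: $S$ holds already-processed vertices, $U$ is the current DFS stack, and $T$ collects vertices not yet visited. Initialise $S=U=\varnothing$ and $T=V(F)$. At each step, if $U=\varnothing$ then move an arbitrary vertex of $T$ onto $U$; otherwise let $v$ be the top of $U$ and either push some out-neighbour $w\in T$ of $v$ onto $U$ if such a $w$ exists, or else pop $v$ by moving it from $U$ to $S$.

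Two properties of this process drive the proof. First, since each push places an out-neighbour of the current top onto the stack, $U$ is at every moment the vertex set of a directed path in $F$, read from the bottom of the stack to the top. Second, I claim that no directed edge of $F$ goes from $S$ to $T$ at any moment: a vertex $v$ is popped only once it has no out-neighbour in $T$, and since vertices migrate only along $T\to U\to S$ (never back to $T$), the out-neighbourhood of any $v\in S$ stays confined to $S\cup U$ for the rest of the execution.

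Combining these observations with the expansion hypothesis of the lemma shows that one cannot have $|S|\ge k$ and $|T|\ge k$ simultaneously. If $m<k$ the conclusion is vacuous, so assume $m\ge k$; then, since $|S|$ starts at $0$ and eventually reaches $m\ge k$, increasing only by $+1$ at each pop (pops leaving $|T|$ unchanged), there is a well-defined pop at which $|S|$ rises from $k-1$ to $k$. Just after that pop, $|S|=k$, so the invariant forces $|T|\le k-1$; consequently the same bound $|T|\le k-1$ holds just \emph{before} the pop, while at that same earlier instant $|S|=k-1$. Hence the stack just before the pop satisfies $|U|\ge m-(k-1)-(k-1)=m-2k+2$, yielding a directed path in $F$ on at least $m-2k+2$ vertices, i.e.\ of length at least $m-2k+1$.

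\textbf{Main obstacle.} There is no serious obstacle beyond correctly formulating the DFS invariants; the only subtlety—and the place where the $+1$ in the final bound comes from—is timing, namely that the path must be read off from the stack \emph{immediately before} (rather than after) the critical pop that brings $|S|$ up to $k$, exploiting that this pop leaves $T$ unchanged while $U$ is one vertex larger than afterwards.
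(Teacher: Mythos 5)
Your DFS argument is correct and is exactly the proof of Lemma 4.4 in \cite{BKS12}: run a depth-first search, use the invariant that the stack $U$ spans a directed path and that no arc runs from $S$ to $T$, and read off the path at the last instant before $|S|$ first reaches $k$, when $|S|=k-1$, $|T|\le k-1$, and hence $|U|\ge m-2k+2$. Your timing remark (reading the stack just before, not just after, the critical pop) is precisely the small point that recovers the $+1$ in the bound.
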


\begin{proof}[Proof of \cref{thm:beta:path}]
Fix any orientation of $G$.
Let $\beta=\beta(\delta)$ be small enough (to be chosen later).
Set $\ell := \lfloor \frac{1}{\sqrt{2\beta}}\rfloor - 1$
and $m := \lceil \frac{n}{\ell+2} \rceil$, noting that $m \geq n/(\frac{1}{\sqrt{2\beta}} + 1) = \sqrt{2\beta}n/(1 + \sqrt{2\beta})$ and $m \leq \frac{2n}{\ell+2} \leq 3\sqrt{\beta}n$.
We claim that $G$ contains a collection of $m$ vertex-disjoint directed paths of length $\ell$ each. We find these paths one by one.
Suppose we already found paths $P_1,\dots,P_j$, $j < m$. Let $G'$ be the graph obtained from $G$ by deleting $V(P_1) \cup \dots \cup V(P_j)$. Then $|V(G')| = n - j \cdot (\ell+1) \geq n - (m-1)(\ell+1) \geq \frac{n}{\ell+2}$. Observe that $G'$ has no independent set of size $2\beta n$ because $G$ is a $\beta$-graph. It follows that $\chi(G') \geq \frac{|V(G')|}{2\beta n} \geq \frac{1}{2\beta(\ell+2)} > \ell$, where the last inequality uses that $\ell(\ell+2) < (\ell+1)^2 \leq \frac{1}{2\beta}$. 
So $\chi(G') \geq \ell+1$.
By the Gallai--Hasse--Roy--Vitaver theorem (see e.g.,~\cite{West}*{Theorem~5.1.21}),
every orientation of a graph with chromatic number $k$ contains a directed path of length $k-1$.
Hence, $G'$ contains a directed path of length $\ell$. This proves our claim about the existence of $P_1,\dots,P_m$.
 
For each $1 \leq i \leq m$, let $x_i$ (resp.\ $y_i$) be the first (resp.\ last) vertex of $P_i$. Define an auxiliary directed graph $F$ on $[m]$ in which $i \rightarrow j$ if and only if $\{y_i,x_j\} \in E(G)$. 
We claim that for every pair of disjoint $A,B \subseteq [m]$ with $|A|,|B| \geq \beta n$, there is an edge of $F$ from $A$ to $B$. Indeed, set $X := \{x_i : i \in A\}$ and $Y := \{y_i : i \in B\}$. Since $G$ is a $\beta$-graph, there is an edge in $G$ between $X$ and $Y$; namely there are $j \in A, i \in B$ with $\{y_i,x_j\} \in E(G)$. This gives that $i \rightarrow j$ in $F$.
 
By Lemma \ref{lem:DFS} with $k = \lceil \beta n \rceil$, $F$ has a directed path of length $t := m - 2 \cdot \lceil \beta n \rceil + 1 \geq m-2\beta n - 1$. Without loss of generality, let us assume that this path is $1,\dots,t+1$. Then $\{y_i,x_{i+1}\} \in E(G)$ for every 
 $1 \leq i \leq t$. Now observe that
 \[
    x_1\xrightarrow{P_1} y_1
    \to x_2
    \to \cdots \to
    y_t \to x_{t+1}\xrightarrow{P_{t+1}} y_{t+1}
  \]
is a path of length $(t+1) \cdot \ell + t \geq (t+1)\ell$ with all but at most $t$ edges in the same direction.
We have 
\[
(t+1)\ell \geq (m - 2\beta n) \cdot \left( \frac{1}{\sqrt{2\beta}} - 2 \right) \geq 
\left( \frac{\sqrt{2\beta}n}{1 + \sqrt{2\beta}} - 2\beta n \right) \cdot \left( \frac{1}{\sqrt{2\beta}} - 2 \right) \geq (1 - \delta)n,
\]
provided that $\beta$ is small enough in terms of $\delta$. Also, 
$t \leq m \leq 3\sqrt{\beta}n \leq \delta n$. This completes the proof.  
\end{proof}

The second main tool used in the proof of \cref{thm:Gnp} is the following lemma, which slightly generalises~\cite{GKM22+}*{Lemma 4.1}. 
This lemma allows us to absorb a long path in $G \sim G(n,p)$ into a Hamilton cycle (for $p$ above the Hamiltonicity threshold). The precise statement is as follows:
\begin{lemma}\label{lem:ham:ext}
  For every $\delta>0$ there exists $0<\eps\le\delta$ for which the following holds.
  Let $p=(\log{n}+\log{\log{n}}+\omega(1))/n$ and let $G\sim G(n,p)$.
  Then, \whp{},
  there exists a partition $V(G)=V^\star\cup V'$ with $|V^\star|\le\eps n$
  such that every path $P\subseteq V'$ with $|V(P)|\le (1-\delta)n$ can be extended to a Hamilton cycle in $G$.
\end{lemma}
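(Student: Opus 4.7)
The plan is to follow the reservoir argument from Lemma~4.1 of~\cite{GKM22+}, adjusting it to allow the path $P$ to be as long as $(1-\delta)n$ rather than only a small fraction of $n$.

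I would begin by taking $V^*$ to be a uniformly random subset of $V(G)$ of size $\lfloor \eps n \rfloor$, for a sufficiently small $\eps = \eps(\delta) \le \delta$. Using standard Chernoff and union-bound arguments in $G \sim G(n,p)$ at the Hamiltonicity threshold, I would establish that \whp{} the following hold simultaneously: every vertex of $V(G)$ has $\Omega(\eps \log n)$ neighbours in $V^*$; the induced graph $G[V']$ is a $\beta$-graph with $\beta = \beta(\eps) \ll \delta$ and satisfies the P\'osa-type expansion $|N_G(U) \cap V'| \ge 2|U|$ for every $U \subseteq V'$ with $|U| \le \beta n$; and the minimum degree of $G$ is at least $(1-o(1))\log n$.

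The extension problem then reduces to a Hamilton connectivity question. Given a path $P \subseteq V'$ with endpoints $x,y$ and $|V(P)| \le (1-\delta)n$, extending $P$ to a Hamilton cycle is equivalent to finding a Hamilton path from $x$ to $y$ in $G^* := G - (V(P) \setminus \{x,y\})$: concatenating such a Hamilton path with $P$ closes the desired cycle. Crucially, $V^* \subseteq V(G^*)$ for every admissible $P$, so the expansion properties of $V^*$ carry over from $G$ to $G^*$. I would then locate this Hamilton path by P\'osa's rotation-extension technique: start from any path from $x$ to $y$ in $G^*$, and repeatedly apply rotations anchored at $x$ together with extensions, using the expansion in and around $V^*$ to produce $\Theta(n)$ candidate endpoints at each stage, until the path fills $V(G^*)$.

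The main obstacle is ensuring that the expansion of $G^*$ holds uniformly over all adversarial choices of the internal vertices of $P$, since this set is chosen after $V^*$ is fixed and may have size up to $(1-\delta)n - 2$. The resolution is that $V^*$ is never removed: choosing $V^*$ at the outset to be a sufficiently well-connected reservoir makes $G^*$ expansive enough for rotation-extension to succeed, regardless of $P$. This robustness of the reservoir is precisely what lets the argument absorb paths of length as large as $(1-\delta)n$.
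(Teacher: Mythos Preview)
Your proposal contains a genuine gap concerning low-degree vertices. At the Hamiltonicity threshold $p=(\log n+\log\log n+\omega(1))/n$, the assertion that ``the minimum degree of $G$ is at least $(1-o(1))\log n$'' is false: when the $\omega(1)$ term grows slowly (say $o(\log\log n)$), \whp{} there are many vertices of each constant degree, in particular many vertices of degree exactly $2$. Consequently the claim that a uniformly random $V^\star$ of size $\lfloor\eps n\rfloor$ satisfies ``every vertex of $V(G)$ has $\Omega(\eps\log n)$ neighbours in $V^\star$'' cannot hold, and the Chernoff/union-bound step you invoke does not go through for these vertices. Without this, the rotation--extension argument in $G^\star$ breaks down: a degree-$2$ vertex lying in $V'\setminus V(P)$ whose neighbours are not both available makes the required small-set expansion fail.

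The paper addresses exactly this issue, and this is where its argument diverges from yours. Rather than a random reservoir, it takes $V^\star=U_1\cup U_2$ where $U_1,U_2$ are the carefully chosen sets of Property~\ref{P:Us}: every vertex of small degree, together with \emph{all} of its neighbours, is placed inside $U_1$, so that $V'$ contains only vertices of degree $\ge\log n/10$. The extension is then carried out not by a single rotation--extension on $G^\star$, but by splitting the leftover vertices $V'\setminus V(P)$ into two halves, pairing each half with one of $U_1,U_2$ to form sets $W_1,W_2$, and applying \cref{lem:ham:Hamiltonian_induced_subgraphs} (a booster argument that is tailored to accommodate degree-$2$ vertices via Properties~\ref{P:min_max_degree}--\ref{P:sparse_small_sets}) to obtain many Hamilton paths in each $G[W_i]$ from a fixed neighbour of an endpoint of $P$. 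A pseudorandomness edge then joins the two Hamilton paths into a Hamilton cycle through $P$. If you wish to salvage your single-reservoir approach, you would at minimum need to force all low-degree vertices and their neighbours into $V^\star$ and then verify that rotation--extension still succeeds in $G^\star$ despite the presence of these vertices; this is essentially what the paper's machinery accomplishes.
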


The proof of \cref{lem:ham:ext} is essentially the same as that of~\cite{GKM22+}*{Lemma 4.1}. For completeness, we give the proof in the appendix. We can now derive \cref{thm:Gnp} from \cref{thm:beta:path} and \cref{lem:ham:ext}.  

\begin{proof}[Proof of \cref{thm:Gnp}]
  Let $\delta>0$ and $p=(\log{n}+\log{\log{n}}+\omega(1))/n$, and let $G\sim G(n,p)$.
  (We assume where needed that $n$ is large enough.)
  By \cref{lem:ham:ext} there exist $0<\eps\le\delta$ and, \whp{}, a partition $V(G)=V^\star\cup V'$ such that $n'=|V'|\ge(1-\eps)n$,
  and any path $P\subseteq V'$ with $|V(P')|\le (1-\delta)n$ can be extended to a Hamilton cycle of $G$.
  Let  $\beta$ be the constant obtained from \cref{thm:beta:path} by plugging in $\delta$.
  As mentioned above, $G$ is \whp{} a $\beta'$-graph for $\beta'=\beta(1-\eps)$.
  If this happens then $G'=G[V']$ is a $\beta$-graph. Now fix any orientation of $G$. 
  By \cref{thm:beta:path} we know that there exists a path $P$ in $G'$ of length at least $(1-\delta)n'$
  (but also at most $(1-\delta)n$)
  with at most $\delta n'$ edges going backwards.
  By the guarantees of \cref{lem:ham:ext} we can, \whp{}, extend $P$ to a Hamilton cycle of $G$,
  having at most $2\delta n'+\eps n\le 3\delta n$ edges going backwards.
  The result follows by taking $\delta\to 0$.
\end{proof}

\section{Proof of \cref{cor:multiplicity}}
\label{sec:tournament}
\begin{proof}[Proof of \cref{cor:multiplicity}]
  Set $\delta'=\delta/2$, and assume for simplicity that $\delta' n$ is an integer.
  Let $\mathcal{P}$ be the set of all paths of length $(1-\delta')n$ in $T$ in which all but at most $\delta' n$ edges are in the same direction.
  Let $C'$ be the constant given by applying \cref{thm:Gnp:path} with $\delta'$. Let $G \sim G(n,C'/n)$.
  The expected number of paths from $\mathcal{P}$ which are present in $G$ is $(C'/n)^{(1-\delta')n} \cdot |\mathcal{P}|$.
On the other hand, at least one such path is present with probability $1 - o(1)$ by \cref{thm:Gnp:path}.
Hence, $(C'/n)^{(1-\delta')n} \cdot |\mathcal{P}| \geq 1/2$,
and so $|\mathcal{P}| \geq 1/2 \cdot (n/C')^{(1-\delta')n}$.
Each path in $\mathcal{P}$ can be extended in $(\delta' n-1)!$ ways into a Hamilton cycle in which at least $(1-\delta)n$ edges are in the same direction,
and each Hamilton cycle is counted at most $n$ times this way.
Hence, the number of such Hamilton cycles is at least
\[
  (\delta'n-1)!\cdot|\mathcal{P}|/n
  \ge \frac{1-o(1)}{2n}\cdot \left(\frac{\delta' n}{3}\right)^{\delta'n} \cdot \left(\frac{n}{C'}\right)^{(1-\delta')n}
  \ge C^{-n}n^n,
\]
for a large enough constant $C$ depending on $C'$ and $\delta$.
\end{proof}

\paragraph{Acknowledgements:} We thank the anonymous referees for a careful reading of the paper and useful comments. 

\bibliography{library}

\begin{appendices}
\newcommand{\CONST}{\textcolor{red}{XXX}}
\section{Proof of \cref{lem:ham:ext}}
As mentioned earlier, the proof is almost identical to the proof of~\cite{GKM22+}*{Lemma 4.1}.
For the proof, we would need the following lemmas and definitions.
At its core, the proof uses P\'osa's rotation--extension technique
(for an overview, see~\cite{Kri16}). 
The following is an immediate consequence of P\'osa's lemma.
\begin{lemma}[P\'{o}sa's lemma~\cite{Pos76}]\label{lem:Posa}
Let $G$ be a graph and let $P = v_0,\dots,v_t$ be a longest path in $G$. Then there exists a set $R \subseteq V(P) \setminus \{v_0\}$ with the following properties:
\begin{enumerate}
    \item For every $v \in R$ there is a path $P'$ with $V(P') = V(P)$ and with endpoints $v_0$ and $v$.
    \item $|N(R)| \leq 2|R|-1$.
\end{enumerate}
\end{lemma}
\begin{proof}
  Take $R$ to be the set of endpoints of paths obtained from $P$ via a sequence of rotations fixing $v_0$.
  P\'osa's lemma \cite{Pos76} (see also \cite{Kri16}) states that $|N(R)| \leq 2|R|-1$.
\end{proof}

Recall that a non-edge
$\{x,y\}$ of $G$ is called a booster if adding $\{x,y\}$ to $G$ creates a graph which is either Hamiltonian or whose longest path is longer than that of $G$.
For a positive integer $k$ and a positive real $\alpha$ we say that a graph $G=(V,E)$ is a \defn{$(k,\alpha)$-expander} if $|N(U)|\ge\alpha|U|$ for every set $U\subseteq V$ of at most $k$ vertices.
The following is a widely-used fact stating that connected $(k,2)$-expanders have many boosters. For a proof, see e.g.~\cite{Kri16}.
\begin{lemma}\label{lem:boosters}
  Let $G$ be a connected $(k,2)$-expander which contains no Hamilton cycle.
  Then $G$ has at least $(k+1)^2/2$ boosters. 
\end{lemma}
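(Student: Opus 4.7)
The plan is to apply the P\'osa rotation--extension technique twice and then exhibit many booster-candidate pairs among the resulting endpoints. First I would take a longest path $P = v_0, \ldots, v_t$ in $G$ and let $R_0$ be the set produced by \cref{lem:Posa} applied to $P$; that lemma yields $|N(R_0)| \le 2|R_0|-1$. Combined with the $(k,2)$-expansion hypothesis, this forces $|R_0| \ge k+1$, because $|R_0|\le k$ would give $|N(R_0)| \ge 2|R_0|$, a contradiction.

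Next, for each $u\in R_0$, I would fix a path $P_u$ from $v_0$ to $u$ with $V(P_u)=V(P)$ (such a path exists by the definition of $R_0$, and it is again a longest path in $G$). Applying \cref{lem:Posa} to $P_u$ but with the roles of the endpoints swapped, so that $u$ is the fixed endpoint, I obtain a set $R_1(u)$ of alternative ``left endpoints'' opposite $u$, and the same expansion argument gives $|R_1(u)| \ge k+1$. Observe that $u \notin R_1(u)$.

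I would then argue that for every $u\in R_0$ and every $w\in R_1(u)$, the pair $\{u,w\}$ is both a non-edge of $G$ and a booster. By construction there exists a path $Q\subseteq G$ with $V(Q)=V(P)$ and endpoints $u$ and $w$. If $\{u,w\}$ were already an edge of $G$, then $Q$ together with this edge would form a cycle $C$ on $V(P)$; since $G$ is non-Hamiltonian we cannot have $V(P)=V(G)$, and if $V(P) \neq V(G)$ then connectivity of $G$ provides an edge from some $x \in V\setminus V(P)$ to a vertex of $C=V(P)$, which after deleting an appropriate edge of $C$ yields a path strictly longer than $P$, contradicting maximality. Hence $\{u,w\}\notin E(G)$, and the same cycle $C$, now created only upon adding $\{u,w\}$, witnesses that $\{u,w\}$ is a booster (either $C$ is already Hamilton, or it can be opened up and extended using the edge guaranteed by connectivity).

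A simple double count then closes the argument: the number of ordered pairs $(u,w)$ with $u\in R_0$ and $w\in R_1(u)$ is at least $(k+1)^2$, and each unordered pair is counted at most twice, giving at least $(k+1)^2/2$ distinct boosters. The only step that is not pure bookkeeping on top of \cref{lem:Posa} is verifying that $\{u,w\}$ cannot already be an edge of $G$; this is precisely where the non-Hamiltonicity and connectivity hypotheses are used, and it is the place I would scrutinise most carefully when writing up the full proof.
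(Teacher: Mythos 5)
Your argument is correct and is exactly the standard double--rotation proof of this lemma; the paper does not spell it out but defers to Krivelevich's survey~\cite{Kri16}, where the same argument appears. You apply P\'osa's lemma once to get a set $R_0$ of at least $k+1$ alternative right endpoints, apply it again from each $u\in R_0$ to get $\ge k+1$ alternative left endpoints, verify that each resulting pair is a non-edge whose addition yields a Hamilton cycle or a longer path (via the connectivity argument), and finish with the factor-of-two double count. One remark on phrasing: the sentence ``since $G$ is non-Hamiltonian we cannot have $V(P)=V(G)$'' is only meant to hold inside the hypothetical where $\{u,w\}\in E(G)$ (so that the cycle $C$ on $V(P)$ would be Hamilton if $V(P)=V(G)$); stated flatly it would be false, since a non-Hamiltonian graph can certainly have a Hamilton path. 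Making the conditional explicit when you write this up would remove the ambiguity, but the underlying logic is sound.
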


Let us now introduce some additional definitions. For a pair of disjoint vertex-sets $U,W$ in a graph, the \defn{density} of $(U,W)$ is defined as 
$d(U,W) := |E(U,W)|/(|U||W|)$. For $\beta,p\in (0,1]$, we say that $G=(V,E)$ is \defn{$(\beta,p)$-pseudorandom} if 
for any two disjoint $U,W\subseteq V$ with $|U|,|W|\ge\beta|V|$ we have $|d(U,W) - p| < \beta p$.
Note that a $(\beta,p)$-pseudorandom graph is, in particular, a $\beta$-graph.

We now move on to establish some useful properties satisfied \whp{} by $G(n,p)$, where $p$ is above the Hamiltonicity threshold.
\begin{lemma}[\cite{GKM22+}*{Lemmas 4.4 and 3.4}]\label{lem:gnp:prop}
  Let $\eps>0$ be sufficiently small, let \linebreak $p=(\log{n}+ \log{\log{n}}+\omega(1))/n$, let $G\sim G(n,p)$
  and let $\beta>0$.
  Then, \whp{},
  \begin{description}[leftmargin=!,labelwidth=\widthof{\bfseries (P1)}]
    \item[\namedlabel{P:min_max_degree}{(P1)}]
      $\delta(G) \geq 2$ and $\Delta(G) \leq 10\log n$.

    \item[\namedlabel{P:smalldist}{(P2)}]
      No vertex $v \in V(G)$ with $d(v) < \log n/10$ is contained in a $3$- or a $4$-cycle, and every two distinct vertices $u,v \in V(G)$ with $d(u),d(v) < \log n/10$ are at distance at least $5$ apart.
    
    \item[\namedlabel{P:sparse_small_sets}{(P3)}]
      Every set $U \subseteq V(G)$ of size at most $\varepsilon n/100$ spans at most $\varepsilon |U| \log n/10$ edges. 
    
    \item[\namedlabel{P:Us}{(P4)}]
      There exist disjoint sets $U_1,U_2 \subseteq V(G)$ with $|U_1|,|U_2|\le \eps n$ for which the following hold for every $v \in V(G)$:
      \begin{description}[leftmargin=!,labelwidth=\widthof{\bfseries (a)}]
        \item[\namedlabel{P:Us:a}{(a)}]
          If $d(v)\ge \log n/10$ then $d(v,U_1),d(v,U_2)\ge \eps \log n/100$;
        \item[\namedlabel{P:Us:b}{(b)}]
          If $d(v)\le \log n/10$ then $v$ and all of its neighbours are in $U_1$.
      \end{description}

    \item[\namedlabel{P:pr}{(P5)}] $G$ is $(\beta,p)$-pseudorandom.
  \end{description} 
\end{lemma}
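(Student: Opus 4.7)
The plan is to verify each of the five properties separately, using standard first moment and concentration arguments for $G(n,p)$; the setup $np = \log n + \log\log n + \omega(1)$ is precisely the threshold function for $\delta(G)\ge 2$, and this is what pins down many of the estimates. Since the lemma is essentially a combination of \cite{GKM22+}*{Lemmas~4.4 and 3.4}, the proof would largely consist of citing those results, but I describe the underlying arguments below.

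For \ref{P:min_max_degree}, the lower bound on $\delta(G)$ is the classical hitting-time result; the upper bound $\Delta(G)\le 10\log n$ follows from a Chernoff tail on $\operatorname{Bin}(n-1,p)$, since $(n-1)p = (1+o(1))\log n$, together with a union bound over $n$ vertices. For \ref{P:smalldist}, the expected number of vertices of degree below $\log n/10$ is $n^{o(1)}$ (again by Chernoff on the degree distribution), while the probability that a fixed vertex lies on a given triangle or $4$-cycle is $O(p^2)$ and $O(np^3)$ respectively; multiplying these small probabilities and summing over the $O(n^3)$ or $O(n^4)$ choices gives expectation $o(1)$, and likewise for the joint event that two low-degree vertices are at distance at most $4$. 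Property \ref{P:sparse_small_sets} is a direct first-moment computation: the probability that some set $U$ of size $k\le \eps n/100$ spans more than $\eps k\log n/10$ edges is at most $\binom{n}{k}\binom{\binom{k}{2}}{\eps k\log n/10} p^{\eps k\log n/10}$, which, using $\binom{n}{k}\le (en/k)^k$, sums to $o(1)$ when $\eps$ is small enough.

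Property \ref{P:pr} is the standard pseudorandomness estimate: for a fixed pair of disjoint sets $U,W$ with $|U|,|W|\ge\beta n$, the quantity $|E(U,W)|$ is $\operatorname{Bin}(|U||W|,p)$, and Chernoff gives deviation $\beta p|U||W|$ with probability at most $\exp(-\Omega(\beta^3 p n^2))$. Since $pn^2 = \Theta(n\log n)$ dominates $n\log 4$, a union bound over the at most $4^n$ choices of $(U,W)$ yields the claim.

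The main obstacle is \ref{P:Us}, which asks for a specific pair of absorbing-like sets. The approach is to choose $U_1$ and $U_2$ randomly: take a uniformly random partition of $V(G)$ into two sets of size $\eps n/2$, together with the rest of the vertices outside. For every vertex $v$ with $d(v)\ge \log n/10$, its degree into each $U_i$ is hypergeometrically distributed with mean at least $\eps \log n/20$, so Chernoff gives probability $1-n^{-\omega(1)}$ that \ref{P:Us:a} holds at $v$; union-bounding over the $n$ vertices takes care of condition \ref{P:Us:a}. For \ref{P:Us:b}, observe that by \ref{P:min_max_degree} and \ref{P:smalldist} the low-degree vertices form a set of size $n^{o(1)}$, pairwise at distance $\ge 5$, and each has only $O(\log n)$ neighbours; after the random partition is drawn, we may therefore reassign all these $o(n)$ vertices into $U_1$ deterministically, which affects only a negligible number of other vertices' intersections with $U_1$ and $U_2$ and preserves \ref{P:Us:a}. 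The bound $|U_1|,|U_2|\le \eps n$ is then satisfied, completing the proof.
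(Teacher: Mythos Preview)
The paper does not prove this lemma; it is quoted from \cite{GKM22+}*{Lemmas 4.4 and 3.4} with no argument given. Your proposal correctly notes this and goes further by sketching the standard proofs, which is more than the present paper does.

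Two remarks on the sketch. First, a minor slip: the expected number of vertices of degree below $\log n/10$ is $n^{1-c}$ for some fixed $c\in(0,1)$, not $n^{o(1)}$; this is still $o(n)$, so nothing downstream changes.

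Second, your argument for \ref{P:Us} has a real gap. After you ``reassign all these $o(n)$ vertices into $U_1$'', you must check that no high-degree vertex $v$ thereby loses too many neighbours from $U_2$; the phrase ``affects only a negligible number of other vertices' intersections'' does not establish this, since a priori a single $v$ could lose $\Theta(\log n)$ neighbours and drop below the threshold in \ref{P:Us:a}. The fix uses the full strength of \ref{P:smalldist} (not just the distance-$5$ part): because a low-degree vertex lies in no $3$- or $4$-cycle and any two low-degree vertices are at distance at least $5$, every vertex $v$ has at most one low-degree vertex within distance $2$, and hence at most one neighbour in the moved set (the low-degree vertex itself, or a single common neighbour with it). Thus $d(v,U_2)$ drops by at most $1$, which is absorbed by aiming for a slightly larger constant in the hypergeometric estimate. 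With this observation inserted, your sketch goes through.
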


\begin{lemma}\label{lem:gnp:props_hitting_boosters}
  For every $\zeta>0$ there exists $c>0$ such that the following holds.
  Let $p=(\log{n}+\log{\log{n}}+\omega(1))/n$ and let $G\sim G(n,p)$.
  Then, \whp{}, $G$ satisfies the following:
  for every $W \subseteq V(G)$ of size $|W| \geq\zeta n$
  and for every $(|W|/4,2)$-expander $H$ on $W$ which is a subgraph of $G$ and has at most $c n \log n$ edges,
  if $H$ is not Hamiltonian then $G$ contains a booster with respect to $H$. 
\end{lemma}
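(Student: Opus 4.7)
The plan is a first-moment argument: union-bound over all candidate graphs $H$ (viewed as labelled graphs on vertex set contained in $[n]$) the event that $H \subseteq G$ and yet $G$ contains no booster of $H$. Since the $(|W|/4,2)$-expansion condition forces every vertex of $H$ to have degree at least two, $W = V(H)$ is determined by $H$, so the union bound really ranges over $H$ alone. For fixed $H$, let $B(H)$ denote the number of boosters of $H$. Conditional on $H \subseteq G$, the non-edges of $H$ are still present in $G$ independently with probability $p$, so the conditional probability that $G$ contains no booster of $H$ is at most $(1-p)^{B(H)}$. The key estimate is that every valid $H$ satisfies $B(H) \geq \zeta^2 n^2/32$.

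I would establish this estimate by splitting on connectivity. If $H$ is connected, \cref{lem:boosters} directly yields $B(H) \geq (|W|/4+1)^2/2 \geq \zeta^2 n^2/32$. If $H$ is disconnected, then each connected component has size strictly greater than $|W|/4$, since otherwise such a component $U$ would satisfy $|N_H(U)| = 0 < 2|U|$, violating expansion. Let $C$ be the component containing a longest path $P = v_0,\dots,v_\ell$ of $H$, so $P$ is also a longest path of $H[C]$. The induced subgraph $H[C]$ inherits the $(|W|/4,2)$-expansion from $H$ (for $U \subseteq C$ of size at most $|W|/4$, all $H$-neighbours of $U$ lie inside $C$), so applying \cref{lem:Posa} to $H[C]$ with fixed endpoint $v_0$ yields a set $R \subseteq V(P)$ with $|N_{H[C]}(R)| \leq 2|R|-1$; combined with expansion this forces $|R| > |W|/4$. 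For every $u \in R$ there is a path of length $\ell$ in $H[C]$ from $v_0$ to $u$, and for any $v \in W \setminus C$ the pair $\{u,v\}$ is a non-edge of $H$ (distinct components), so appending the edge $\{u,v\}$ gives a path of length $\ell+1$ in $H \cup \{uv\}$ -- strictly longer than the longest path of $H$. Hence each such $\{u,v\}$ is a booster of $H$, giving at least $|R| \cdot |W \setminus C| > (|W|/4)^2 \geq \zeta^2 n^2/16$ boosters.

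Finally I would carry out the union bound. A valid $H$ has minimum degree at least two and hence $e(H) \geq |W| \geq \zeta n$. For each $m$ in the range $[\zeta n, cn\log n]$, the number of graphs $H$ with $e(H) = m$ is at most $\binom{\binom{n}{2}}{m} \leq (en^2/(2m))^m$, and the corresponding bad-event probability for such an $H$ is at most $p^m(1-p)^{\zeta^2 n^2/32} \leq p^m \exp(-\zeta^2 n \log n/32)$. The function $m \mapsto (en^2p/(2m))^m$ is unimodal with maximum near $m = n^2 p/2 = \Theta(n\log n)$, so on our range it is maximised at $m = cn\log n$, where it is of order $(e/c)^{cn\log n}$. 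Summing over the $O(n\log n)$ values of $m$ contributes only a polynomial factor, producing a total bound of the form $\exp\!\left(O(cn\log n \cdot \log(1/c)) - \zeta^2 n\log n/32\right)$, which is $o(1)$ provided $c = c(\zeta)$ is chosen small enough that $c\log(1/c)$ is sufficiently smaller than $\zeta^2$. The only mildly nonstandard point is the booster estimate for disconnected $H$, which \cref{lem:boosters} does not cover and is handled via the Pósa-rotation argument within the component carrying a longest path; everything else is a routine union-bound calculation.
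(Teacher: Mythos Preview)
Your proof is correct and follows essentially the same first-moment/union-bound strategy as the paper. The one notable difference is that you treat the disconnected case separately via a P\'osa-rotation argument inside the component carrying a longest path, whereas the paper simply observes that a $(|V(H)|/4,2)$-expander is necessarily connected (any component of size at most $|V(H)|/4$ contradicts expansion, and if a component $C$ has $|V(H)|/4<|C|\le |V(H)|/2$, then a set $U\subseteq C$ of size $\lfloor |V(H)|/4\rfloor$ has $|N(U)|\le |C|-|U|<2|U|$), so \cref{lem:boosters} applies directly and your disconnected case is vacuous. On the other hand, your union bound is marginally tidier in that you enumerate $H$ directly rather than first choosing $W$ and then $H$, exploiting the fact that $\delta(H)\ge 2$ forces $W=V(H)$; the paper absorbs the redundant $2^n$ factor for $W$ into the exponential slack.
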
 
\noindent

\begin{proof}
The proof follows that of~\cite{GKM22+}*{Lemma 4.5}.
  We use a first moment argument. Evidently, the number of choices for the set $W$ is at most $2^n$.
  Let us fix a choice of $W$. For each $t$, the number of choices of $H$ for which $|E(H)| = t$ is at most 
  \[
  \binom{\binom{|W|}{2}}{t} \leq \binom{n^2}{t} \leq 
  \left( \frac{en^2}{t} \right)^t.
  \]
  Now let $H$ be a non-Hamiltonian $(|W|/4,2)$-expander on $W$, and set $t := |E(H)|$.
  It is known and easy to show that if a graph $H$ is a $(|V(H)|/4,2)$-expander then $H$ is connected.
  By \cref{lem:boosters}, $H$ has at least $(|W|/4)^2/2 = |W|^2/32 \geq \zeta^2 n^2/32$ boosters.
  Now, the probability that $G$ contains $H$, but no booster thereof is at most 
  \[
    p^t \cdot (1 - p)^{\zeta^2 n^2/32} \leq 
    p^t \cdot \left( 1 - \frac{\log n}{n} \right)^{\zeta^2 n^2/32} \leq 
    \left( \frac{2\log n}{n} \right)^t \cdot \exp\left( {-\zeta^2 n\log n/32} \right). 
  \]
  Summing over all choices of $W$ and $H$, we see that the 
  probability that the assertion of the lemma does not hold is at most
  \begin{equation}\label{eq:hitting_expander_boosters_union_bound}
    2^n \cdot \exp\left( {-\zeta^2 n\log n/32} \right) \cdot 
    \sum_{t = 1}^{cn\log n}{\left( \frac{2en\log n}{t} \right)^t}. 
  \end{equation}
  Setting $g(t) := (2en\log n/t)^t$, we note that $g'(t) = g(t) \cdot \left( \log(2en\log n/t) - 1 \right) > 0$ for every $t$ in the range of the sum in \eqref{eq:hitting_expander_boosters_union_bound}, assuming $c<1$, say. Thus, this sum is not larger than
  \[
  c n\log n \cdot 
  \left( 2e/c \right)^{c n\log n} = 
  \exp \big( {\big( \log(2e/c) \cdot c + o(1) \big) n \log n} \big).
  \]
  Now, if $c$ is small enough so that $\log(2e/c) \cdot c < \zeta^2/32$,
  we get that \eqref{eq:hitting_expander_boosters_union_bound} tends to $0$ as $n$ tends to infinity. This completes the proof.  
\end{proof}

The following lemma (\cite{GKM22+}*{Lemma 4.6}) states that a graph possessing certain simple properties is necessarily an expander. Such statements are common in the study of Hamiltonicity of random graphs.
\begin{lemma}[\cite{GKM22+}*{Lemma 4.6}]\label{lem:expander_sufficient_conditions}
Let $m,d \geq 1$ be integers and let $H$ be a graph on $h\ge 4m$ vertices satisfying the following properties:
\begin{enumerate}
    \item $\delta(H) \geq 2$;
    \item No vertex $v \in V(H)$ with $d(v) < d$ is contained in a $3$- or a $4$-cycle, and every two distinct vertices $u,v \in V(H)$ with $d(u),d(v) < d$ are at distance at least $5$ apart;
    \item Every set $U \subseteq V(H)$ of size at most $5 m$ contains at most $d|U| / 10$ edges;
    \item There is an edge between every pair of disjoint sets $U_1,U_2 \subseteq V(H)$ of size $m$ each.
\end{enumerate}
Then $H$ is an $(h/4,2)$-expander. 
\end{lemma}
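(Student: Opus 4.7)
The plan is to argue by contradiction: suppose $U \subseteq V(H)$ has $|U| \le h/4$ but $|N(U)| < 2|U|$, and set $W := U \cup N(U)$, so that $|W| < 3|U| \le 3h/4$ and hence $|V(H) \setminus W| > h/4$. The proof splits naturally according to the size of $|U|$ relative to $m$.

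First, if $|U| \ge m$, then $|V(H) \setminus W| > h/4 \ge m$ (using $h \ge 4m$), so both $U$ and $V(H) \setminus W$ contain disjoint subsets of size exactly $m$. Property~(4) then forces an edge between them, which is impossible since $N(U) \subseteq W$, so no edge of $H$ can leave $W$ through a vertex of $U$. Contradiction.

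For the remaining range $|U| < m$, we have $|W| < 3m \le 5m$, so property~(3) applies and gives $e(W) \le d|W|/10 < 3d|U|/10$. Since every edge incident to $U$ lies in $W$, this implies $\sum_{v \in U} \deg_H(v) \le 2e(W) < 3d|U|/5$. Splitting $U$ as $U_L \cup U_H$ where $U_L$ collects the low-degree vertices ($\deg_H < d$), and using $\delta(H) \ge 2$ from property~(1), one obtains the lower bound $\sum_{v \in U} \deg_H(v) \ge d|U_H| + 2|U_L|$; combined with the upper bound this forces $|U_L|$ to be at least a definite constant fraction of $|U|$ (when $d$ is not too small, which is the regime of interest). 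We then invoke property~(2): the vertices in $U_L$ are pairwise at distance at least $5$, so their neighbourhoods are pairwise disjoint, each of size at least $2$, and entirely contained in $L^c$. Locating these $\ge 2|U_L|$ vertices inside $L^c \cap W \subseteq U_H \cup (N(U) \setminus U)$, together with a refined edge count in $W$ that also takes into account that each vertex of $N(U) \setminus U$ contributes at least one edge into $U$, yields the desired contradiction with $|N(U)| < 2|U|$.

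The crux of the proof lies in this second case, where several bounds must all be squeezed simultaneously: the edge-density upper bound from~(3), the degree contributions from both halves of $U$ via~(1), and the rigidity forced by~(2) on where low-degree vertices and their neighbours can lie. Using any one of these ingredients in isolation only yields $|N(U)| \ge c|U|$ for some $c < 2$, so the argument requires balancing all three carefully.
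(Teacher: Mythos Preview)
First, note that the paper does not actually prove this lemma: it is quoted verbatim from~\cite{GKM22+} and used as a black box in the appendix. So there is no proof in the present paper to compare against.

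That said, your sketch has a real gap. The case $|U|\ge m$ is fine, and your setup for $|U|<m$ is the standard one. But the final paragraph hides all of the difficulty behind the phrase ``a refined edge count \ldots\ yields the desired contradiction'', and if one actually tries to carry it out using only the ingredients you list, it does not close. Concretely: writing $s=|U_L|$, $t=|U_H|$, $X=N(U)\setminus U$, from $e(W)<3d|U|/10$ together with $\sum_{v\in U}d(v)\le 2e(W)$ and the extra $|X|$-contribution you mention, one obtains (for large $d$) roughly $|X|\gtrsim 4t-s$; and from the disjoint neighbourhoods of $U_L$ one obtains $|X|\ge 2s-t$. Neither inequality gives $|X|\ge 2(s+t)$ in general, and no convex combination of them does either.

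What is missing is the \emph{first half} of Property~(2): that low-degree vertices lie on no $3$- or $4$-cycle. You only invoke the distance-$5$ condition (to make the sets $N(u)$, $u\in U_L$, pairwise disjoint and disjoint from $U_L$). The short-cycle condition is what lets one push further: for $u\in U_L$ and distinct $v_1,v_2\in N(u)$ one has $N(v_i)\cap N(v_j)=\{u\}$ and $v_i\not\sim v_j$, so whenever some $v_i$ lies in $U_H$ the ball of radius~$2$ around $u$ intersects $W$ in at least $1+d(u)+(d-1)\,|N(u)\cap U_H|$ vertices, and one can similarly control the overlap $(N(U_L)\cap N(U_H))\setminus U$. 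These refinements are exactly what is needed to make the bookkeeping produce $|X|\ge 2|U|$. Your sketch never touches them. Also, the parenthetical ``when $d$ is not too small, which is the regime of interest'' is not innocent: the lemma is stated for all integers $d\ge 1$, and the balance of the argument genuinely shifts as $d$ varies, so this case cannot simply be waved away.
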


\begin{lemma}\label{lem:ham:Hamiltonian_induced_subgraphs}
  Let $\eps,\zeta > 0$.
  For $p=(\log{n}+\log{\log{n}}+\omega(1))/n$, the random graph $G\sim G(n,p)$ satisfies the following \whp{}.
  Let $W\subseteq V(G)$ be such that $|W| \geq \zeta n$, and for every $v\in W$ it holds that 
  $d(v,W)\ge \min\{ d(v), \eps \log n\}$. Then
  for every $w \in W$ there exists $Y\subseteq W$ with $|Y|\ge \zeta n/4$ such that for each $y\in Y$,
  there is a Hamilton path in $G[W]$ whose endpoints are $w$ and $y$.
\end{lemma}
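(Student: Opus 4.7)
My plan is to run the standard P\'osa rotation--extension machinery inside $G[W]$. The key observation is that the hypothesis $d(v,W)\ge\min\{d(v),\eps\log n\}$ is tailored so that the \whp{} properties \ref{P:min_max_degree} through \ref{P:pr} of $G$ from \cref{lem:gnp:prop} carry over to $G[W]$ with only the degrees capped at $\eps\log n$: low-$G$-degree vertices have all their neighbours in $W$, while high-$G$-degree vertices retain $\Omega(\eps\log n)$ neighbours in each of $U_1\cap W$ and $U_2\cap W$.

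I would first extract a sparse subgraph $H \subseteq G[W]$ with at most $cn\log n$ edges --- where $c$ is the constant from \cref{lem:gnp:props_hitting_boosters} applied with parameter $\zeta$ --- that is a $(|W|/4,2)$-expander. Concretely, include in $H$ every edge of $G[W]$ incident to a vertex of $G$-degree $<\log n/10$ (all such edges lie in $G[W]$ by \ref{P:Us:b} and the hypothesis, and contribute $O(n)$ edges in total), together with, for each $v\in W$ of $G$-degree $\ge\log n/10$, a batch of $\lceil\eps\log n/100\rceil$ edges into $U_1\cap W$ and the same number into $U_2\cap W$ (possible by \ref{P:Us:a} combined with the hypothesis). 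The four conditions of \cref{lem:expander_sufficient_conditions} with $h=|W|$, $d=\Theta(\eps\log n)$ and $m=\lceil\beta n\rceil$ for a sufficiently small $\beta>0$ can then be verified directly: (1) and (2) from \ref{P:min_max_degree} and \ref{P:smalldist}, (3) from \ref{P:sparse_small_sets}, and (4) from the pseudorandomness \ref{P:pr} applied to subsets of $W$ of size $\ge\beta n$. Shrinking $\eps$ if necessary ensures $|E(H)|\le cn\log n$.

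Given such an $H$, I would grow it to a Hamiltonian subgraph of $G[W]$ by iteratively applying \cref{lem:gnp:props_hitting_boosters}: each booster has both endpoints in $W$, hence belongs to $G[W]$, and adding it preserves expansion. After $O(|W|)$ iterations we obtain a Hamilton cycle $C$ in $G[W]$; deleting an edge of $C$ at the designated $w\in W$ yields a Hamilton path $P_0$ of $G[W]$ from $w$. A final application of \cref{lem:Posa} to $P_0$ inside $G[W]$ with $w$ as the fixed endpoint, letting $R\subseteq W\sm\{w\}$ be the set of reachable other endpoints, gives $|N_{G[W]}(R)|\le 2|R|-1$; since $G[W]\supseteq H$ is a $(|W|/4,2)$-expander, this forces $|R|>|W|/4\ge \zeta n/4$, and $Y:=R$ is the desired set of Hamilton-path endpoints.

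The main obstacle I anticipate is the first step: choosing $H$'s edges so that it simultaneously satisfies all four conditions of \cref{lem:expander_sufficient_conditions} while respecting the edge-count bound $|E(H)|\le cn\log n$. In particular, condition~(4) uses the pseudorandomness of $G$ restricted to $W$, which is where the assumption $|W|\ge\zeta n$ is crucial, and the hypothesis $d(v,W)\ge\min\{d(v),\eps\log n\}$ is used exactly to guarantee that every vertex of $W$ has enough neighbours in $W$ to supply its contribution to $H$. Beyond that, the argument is a routine adaptation of~\cite{GKM22+}*{Lemma 4.1}.
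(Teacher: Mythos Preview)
Your overall architecture---build a sparse $(|W|/4,2)$-expander $H\subseteq G[W]$, boost to a Hamilton cycle via \cref{lem:gnp:props_hitting_boosters}, then apply \cref{lem:Posa}---matches the paper's. The gap is in the construction of $H$.

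First, your rule ``for each high-degree $v\in W$ take $\lceil\eps\log n/100\rceil$ edges into $U_1\cap W$ and into $U_2\cap W$'' is not justified: \ref{P:Us:a} gives $d(v,U_i)\ge\eps\log n/100$ in $G$, and the hypothesis gives $d(v,W)\ge\eps\log n$, but nothing forces $N(v)\cap U_i\cap W$ to be nonempty. The sets $U_1,U_2$ are fixed once and for all by \cref{lem:gnp:prop}, while $W$ is arbitrary; the lemma must hold even when $U_i\cap W=\varnothing$.

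Second, and more seriously, even granting the construction, your $H$ does \emph{not} satisfy condition~(4) of \cref{lem:expander_sufficient_conditions}. Every $H$-edge you add at a high-degree vertex has one endpoint in $(U_1\cup U_2)\cap W$; hence if $A,B\subseteq W\setminus(U_1\cup U_2)$ are disjoint sets of size $m$ containing no low-degree vertices, there is no $H$-edge between them. Pseudorandomness of $G$ gives many $G$-edges between $A$ and $B$, but you have discarded all of them. The paper handles this by taking $H$ to be a \emph{random} sparsification: for each $v\in W$ with $d(v,W)\ge d_0:=\eps\log n$, keep a uniformly random set of $d_0$ incident edges in $G[W]$. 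One then shows that for fixed $A,B$ of size $m=\eps n/500$ the probability that $E_H(A,B)=\varnothing$ is $e^{-\Omega(n\log n)}$ (using \ref{P:pr} to lower-bound $|E_G(A,B)|$ and a product bound over independent vertex choices), and union-bounds over the $\le 2^{2n}$ pairs $(A,B)$. This randomised step is the missing idea in your outline; a deterministic choice of edges directed at two small target sets cannot certify expansion between arbitrary linear-sized subsets of $W$.
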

\noindent

\begin{proof}
    The proof follows that of~\cite{GKM22+}*{Lemma 4.7}.
  We may and will assume $\eps$ is sufficiently small
  (it is enough to have $\eps \leq \min\{1/10,125\zeta,c/2\}$, where $c=c(\zeta)$ is the constant from \cref{lem:gnp:props_hitting_boosters}).
  We will assume that the events defined in \cref{lem:gnp:prop,lem:gnp:props_hitting_boosters} hold, where we use \ref{P:pr} in \cref{lem:gnp:prop} with $\beta=\eps/500$.
  These events hold \whp{}. We will show that then, deterministically, the assertion of \cref{lem:ham:Hamiltonian_induced_subgraphs} holds. 

  It will be convenient to set $d_0 := \eps \log n$.
  Let $W \subseteq V(G)$ be as in the statement of \cref{lem:ham:Hamiltonian_induced_subgraphs}.
  We select a random spanning subgraph $H$ of $G[W]$ as follows. For each $v \in W$, if $d(v,W) < d_0$ then add to $H$ all edges of $G[W]$ incident to $v$. Otherwise, namely if $d(v,W) \geq d_0$, then randomly select a set of $d_0$ edges of $G[W]$ incident to $v$ and add these to $H$. Note that $|E(H)| \leq |W| \cdot d_0 \leq \eps n \log n$. On the other hand, our assumption that $d(v,W)\ge \min\{ d(v), \eps \log n\}$ for every $v \in W$ implies that 
  $\delta(H) \geq \min\{\delta(G), d_0\}$.
  Hence, as $d_0 \geq 2$ (for large enough $n$), we have $\delta(H) \geq 2$ by Property \ref{P:min_max_degree}
    of \cref{lem:gnp:prop}.
  
  We claim that with positive probability (in fact, \whp), $H$ is a $(|W|/4,2)$-expander. 
  In light of \cref{lem:expander_sufficient_conditions},
  it is sufficient to show that with positive probability, $H$ satisfies Conditions 1--4 in that lemma. 
  Here, we will choose the parameters of \cref{lem:expander_sufficient_conditions} as $d := d_0$ and $m := \eps n/500$. Note that $|V(H)| = |W| =\zeta n \geq \varepsilon n/125 = 4m$, as required by \cref{lem:expander_sufficient_conditions}.
  We already showed that $\delta(H) \geq 2$ (which is Condition 1 in \cref{lem:expander_sufficient_conditions}). Let us prove Condition 2. Observe that if $v \in V(H) = W$ has degree less than $d_0$ in $H$, then also $d(v,W) < d_0$ in $G$ (by the definition of $H$). But then, since $d(v,W) \geq \min\{d(v),d_0\}$ by the assumption of \cref{lem:ham:Hamiltonian_induced_subgraphs}, we have $d(v) < d_0 = \varepsilon \log n \leq \log n/10$. 
  Now, by \ref{P:smalldist} in \cref{lem:gnp:prop}, there is no 3- or 4-cycle in $G$ containing a vertex $v$ whose degree in $H$ is less than $d_0$, and every two such vertices are at distance at least 5 apart.   

    Condition 3 in \cref{lem:expander_sufficient_conditions} holds because $H$ is a subgraph of $G$ and due to Property \ref{P:sparse_small_sets} in \cref{lem:gnp:prop} (note that $5m = \varepsilon n /100$). 
    Let us now prove that Condition 4 holds. Let 
  $U_1,U_2 \subseteq V(H) = W$ be disjoint sets satisfying $|U_1|,|U_2| = m = \eps n/500$. Since $G$ is $(\beta,p)$-pseudorandom with $\beta = \varepsilon/500$, we have 
  \begin{equation}\label{eq:sparse_expander_expansion}
  |E_G(U_1,U_2)| \geq 
  (1 - \beta)p \cdot |U_1||U_2| \geq \frac{|U_1||U_2|\log n}{2n} \geq \frac{\eps^2 n \log n}{500000} = \Omega(n\log n)
  \; .
  \end{equation}
  Now, let us bound (from above) the probability that $|E_H(U_1,U_2)| = 0$ (where the randomness is with respect to the choice of $H$).
  Recall that $H$ is defined by choosing, for each $v \in W$, a random set $E(v)$ of $\min\{d(v,W),d_0\}$ edges of $G[W]$ incident to $v$, with all choices made uniformly and independently, and letting $E(H) = \bigcup_{v \in W}{E(v)}$. 
  Fix any $u_1 \in U_1$ with $d(u_1,U_2) \geq 1$, and let $\mathcal{A}_{u_1}$ be the event that 
  there is no edge in $E(u_1)$ with an endpoint in $U_2$. 
  Observe that if 
  $d(u_1,W) < d_0$ then $\pr(\mathcal{A}_{u_1}) = 0$, and otherwise
    \begin{align*}
  \pr(\mathcal{A}_{u_1}) &= \binom{d(u_1,W)-d(u_1,U_2)}{d_0}/\binom{d(u_1,W)}{d_0} = 
  \prod_{i=0}^{d_0-1}{\frac{d(u_1,W) - d(u_1,U_2) - i}{d(u_1,W)-i}} 
  \\&\leq 
  \left(  
  1 - \frac{d(u_1,U_2)}{d(u_1,W)}
  \right)^{d_0} 
  \leq
  \left(  
  1 - \frac{d(u_1,U_2)}{\Delta(G)}
  \right)^{d_0} 
  \leq 
  e^{-d(u_1,U_2) \cdot \frac{d_0}{\Delta(G)}} \leq 
  e^{-\varepsilon d(u_1,U_2)/10} \;.
  \end{align*}
  Here, in the last inequality we used Property \ref{P:min_max_degree} in \cref{lem:gnp:prop}.
  Note that the events \linebreak $(\mathcal{A}_{u_1} : u_1 \in U_1)$ are independent, and that if $E_H(U_1,U_2) = \es$ then $\mathcal{A}_{u_1}$ occurred for every $u_1 \in U_1$ with $d(u_1,U_2) \geq 1$. It now follows that
  \[
  \pr\left( E_H(U_1,U_2) = \es \right) \leq 
  \exp \left( -\frac{\eps}{10} \cdot \sum_{u_1 \in U_1}{d(u_1,U_2)} \right) = 
  \exp \left( -\frac{\eps}{10} \cdot |E_G(U_1,U_2)| \right) \leq 
  e^{-\Omega(n \log n)},
  \]
  where in the last inequality we used \eqref{eq:sparse_expander_expansion}. 
  By taking the union bound over all at most $2^{2n}$ choices of $U_1,U_2$, we see that with high probability, 
  $E_H(U_1,U_2) \neq \es$ for every pair of disjoint sets $U_1,U_2 \subseteq W$ of size $m$ each.
  
  Finally, we apply \cref{lem:expander_sufficient_conditions} to conclude that \whp{} $H$ is a $(|W|/4,2)$-expander.
  From now on, we fix such a choice of $H$.
  Before establishing the assertion of the lemma, we first show that $G[W]$ is Hamiltonian. 
  To find a Hamilton cycle in $G[W]$, we define a sequence of graphs $H_i$, $i \geq 0$, as follows.
  To begin, set $H_0 = H$.
  For each $i \geq 0$, if $H_i$ is Hamiltonian then stop, and otherwise take a booster of $H_i$ contained in $G[W]$ and add it to $H_i$ to obtain $H_{i+1}$.
  That such a booster exists is guaranteed by \cref{lem:gnp:props_hitting_boosters}, as we will always have $|E(H_i)| \leq |E(H)| + |W| \leq |E(H)| + n \leq \eps n\log n + n \leq c/2 \cdot n\log n + n \leq c n \log n$, provided that $\eps$ is smaller than $c/2$, where $c$ is the constant appearing in \cref{lem:gnp:props_hitting_boosters}.
  Note also that $H_i$ is a subgraph of $G[W]$ for each $i \geq 0$. 
  Evidently, this process has to stop (because as long as $H_i$ is not Hamiltonian, the maximum length of a path in $H_i$ is longer than in $H_{i-1}$), thus showing that $G[W]$ must contain a Hamilton cycle, as claimed.
  
  Now let $w \in W$. As $G[W]$ is Hamiltonian, there exists a Hamilton path $P$ of $G[W]$ such that $w$ is one of the endpoints of $P$.
  Evidently, $P$ is a longest path in $G[W]$. Furthermore, note that $G[W]$ is a $(|W|/4,2)$-expander because $H$, a subgraph of $G[W]$, is such an expander. 
  By \cref{lem:Posa} (applied to $G[W]$), there exists a set $R \subseteq V(P) = W$ such that for every $y \in R$ there is a Hamilton path in $G[W]$ with endpoints $w$ and $y$, and such that $|N_{G[W]}(R)| \leq 2|R| - 1$. Now, since $G[W]$ is a $(|W|/4,2)$-expander,
  it must be the case that $|R| > |W|/4 \ge \zeta n/4$.
  So we see that the assertion of the lemma holds with $Y = R$. This completes the proof.  
\end{proof}

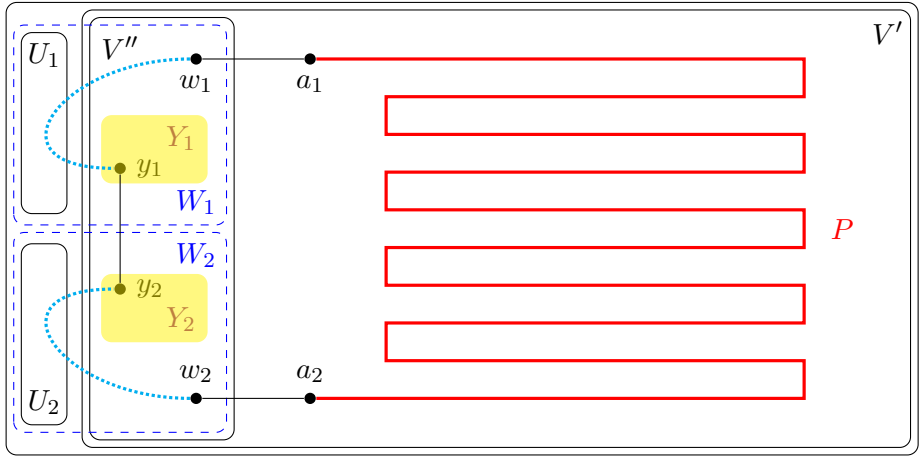
\begin{figure*}[t]
  \captionsetup{width=0.879\textwidth,font=small}
  \centering

\usetikzlibrary{calc}

\tikzset{
  vertex/.style={fill,circle,inner sep=1.5pt},
}

\begin{tikzpicture}
\def\xa{0cm}
\def\xb{1cm}
\def\xba{1.5cm}
\def\xbb{3cm}
\def\xm{6cm}
\def\xc{9cm}
\def\xd{12cm}
\def\xm{6cm}
\def\cC{4.5cm}
\def\xPa{5cm}
\def\xP{10.5cm}

\def\ya{0cm}
\def\yb{3cm}
\def\yc{6cm}
\def\yCa{1.5cm}
\def\yCb{4.5cm}

\def\epl{0.5cm}
\def\eps{0.1cm}

\clip ({\xa-\epl},{\ya-\epl}) rectangle ({\xd+\epl},{\yc+\epl});

\draw[rounded corners] (\xa,\ya) rectangle (\xd,\yc);

\draw[rounded corners]
  ({\xa+2*\eps},{\yb+2*\eps}) rectangle ({\xb-2*\eps},{\yc-4*\eps});
\node (U1) at ({\xa+5*\eps},{\yc-7*\eps}) {$U_1$};

\draw[rounded corners]
  ({\xa+2*\eps},{\ya+4*\eps}) rectangle ({\xb-2*\eps},{\yb-2*\eps});
\node (U2) at ({\xa+5*\eps},{\ya+7*\eps}) {$U_2$};

\draw[rounded corners]
  ({\xb},{\ya+\eps}) rectangle ({\xd-\eps},{\yc-\eps});
\node (V') at ({\xd-4*\eps},{\yc-4*\eps}) {$V'$};

\node[vertex,label={270:$a_1$}] (a1) at (\xPa-2*\epl,\yc-1.5*\epl) {};
\node[vertex,label={90:$a_2$}] (a2) at (\xPa-2*\epl,\ya+1.5*\epl) {};
\draw[very thick,red] (a1)
  -- (\xP,\yc-1.5*\epl)
  -- (\xP,\yc-2.5*\epl)
  -- (\xPa,\yc-2.5*\epl)
  -- (\xPa,\yc-3.5*\epl)
  -- (\xP,\yc-3.5*\epl)
  -- (\xP,\yc-4.5*\epl)
  -- (\xPa,\yc-4.5*\epl)
  -- (\xPa,\yb+0.5*\epl)
  -- (\xP,\yb+0.5*\epl)
  -- (\xP,\yb-0.5*\epl)
  -- (\xPa,\yb-0.5*\epl)
  -- (\xPa,\ya+4.5*\epl)
  -- (\xP,\ya+4.5*\epl)
  -- (\xP,\ya+3.5*\epl)
  -- (\xPa,\ya+3.5*\epl)
  -- (\xPa,\ya+2.5*\epl)
  -- (\xP,\ya+2.5*\epl)
  -- (\xP,\ya+1.5*\epl)
  -- (a2);
\node[red] (P) at (\xP+\epl,\yb) {$P$};

\draw[rounded corners]
  ({\xb+\eps},{\ya+2*\eps}) rectangle ({\xbb},{\yc-2*\eps});
\node (V'') at ({\xb+5*\eps},{\yc-6*\eps}) {$V''$};

\draw[rounded corners,blue,dashed]
  ({\xa+\eps},{\yb+0.5*\eps}) rectangle ({\xbb-\eps},{\yc-3*\eps});
\node[blue] (W1) at ({\xbb-5*\eps},{\yb+3.5*\eps}) {$W_1$};

\draw[rounded corners,blue,dashed]
  ({\xa+\eps},{\ya+3*\eps}) rectangle ({\xbb-\eps},{\yb-0.5*\eps});
\node[blue] (W2) at ({\xbb-5*\eps},{\yb-3.5*\eps}) {$W_2$};

\node[vertex,label={270:$w_1$}] (w1)
  at ({\xbb-1*\epl},{\yc-1.5*\epl}) {};
\draw (a1) -- (w1);

\node[vertex,label={90:$w_2$}] (w2)
  at ({\xbb-1*\epl},{\ya+1.5*\epl}) {};
\draw (a2) -- (w2);

\node (x1)
  at ($(\cC,\yCb)+(-15:{(-3)*\epl} and {1.5*\epl})$) {};

\node[vertex,label={0:$y_1$}] (y1) at (\xba,\yb+8*\eps) {};
\fill[rounded corners,yellow,opacity=0.5]
  ($(y1)-(0.5*\epl,2*\eps)$) rectangle ($(x1)+(-4*\eps,2*\eps)$);
\node[brown] (Y1) at (\xba+3*\eps+1*\epl,\yCb-3*\eps) {$Y_1$};
\draw[very thick,cyan,densely dotted] (w1) to[out=180,in=180,looseness=2.5] (y1);

\node (x2)
  at ($(\cC,\yCa)+(15:{(-3)*\epl} and {1.5*\epl})$) {};

\node[vertex,label={0:$y_2$}] (y2) at (\xba,\yb-8*\eps) {};
\fill[rounded corners,yellow,opacity=0.5]
  ($(y2)-(0.5*\epl,-2*\eps)$) rectangle ($(x2)+(-4*\eps,-2*\eps)$);
\node[brown] (Y2) at (\xba+3*\eps+1*\epl,\yCa+3*\eps) {$Y_2$};
\draw[very thick,cyan,densely dotted] (w2) to[out=180,in=180,looseness=2.5] (y2);

\draw (y1) -- (y2);

\end{tikzpicture}
  \caption{Outline of the proof of \cref{lem:ham:ext}.}
  \label{fig:ham:ext}
\end{figure*}

\begin{proof}[Proof of \cref{lem:ham:ext}]
  Set $\eps=\delta/3$.
  For convenience, we show the existence of a partition $V(G)=V^\star\cup V'$ with $|V^\star|\le 2\eps n$
  instead of $|V^\star|\le\eps n$ (this clearly does not matter).
  We assume that $G$ satisfies the properties detailed in \cref{lem:gnp:prop} for $\beta<\delta/40$ and the assertion of \cref{lem:ham:Hamiltonian_induced_subgraphs}.
  These events happen \whp{}.
  Let $U_1,U_2$ be disjoint subsets of $V=V(G)$ satisfying \ref{P:Us}. 
  Set $V^\star= U_1\cup U_2$ and $V'=V\sm V^\star$,
  and let $P\subseteq V'$ be a path with $|V(P)|\le (1-\delta)n$ and endpoints $a_1,a_2$.
  In particular, $|V^\star|\le 2\eps n$. 
  Our goal is to extend $P$ to a Hamilton cycle of $G$.
  Write $V''=V'\sm V(P)$, 
  partition $V''=V''_1\cup V''_2$ as equally as possible, so $|V''_i| \geq \lfloor (\delta-2\varepsilon)n/2\rfloor \geq \delta n/10$.
  For $i=1,2$, let $W_i=V''_i\cup U_i$ and choose a neighbour $w_i$ of $a_i$ in $W_i$;
  this is possible since $d(a_i,U_i)\ge \eps\log{n}/100$ by \ref{P:Us}.
  Note that $|W_i|\ge \delta n/10$ and for every $v\in W_i$ it holds that $d(v,W_i)\ge d(v,U_i) \geq \min\{ d(v), \eps\log{n}/100\}$ by \ref{P:Us}.
  Hence, by \cref{lem:ham:Hamiltonian_induced_subgraphs}
  (with parameters $\zeta=\delta/10$ and $\varepsilon/100$),
  there exists a set $Y_i\subseteq W_i$ with $|Y_i|\ge \delta n/40$
  such that for every $y\in Y_i$ there is a Hamilton path spanning $W_i$ from $w_i$ to $y$.
  Since $G$ is $(\beta,p)$-pseudorandom for $\beta<\delta/40$,
  there is an edge $e$ in $G$ between $Y_1$ and $Y_2$ with endpoints $y_i\in Y_i$, say.
  For $i=1,2$, denote by $Q_{y_i}$ the Hamilton path between $w_i$ and $y_i$.
  We now construct a Hamilton cycle of $G$ containing $P$ as follows (as depicted in \cref{fig:ham:ext}):
  \[
    a_1 \to w_1 \xrightarrow{Q_{y_1}} y_1 \xrightarrow{e} y_2 \xrightarrow{Q_{y_2}} w_2
    \to a_2 \xrightarrow{P} a_1. \qedhere
  \]
\end{proof}

\end{appendices}

\end{document}